\newtheorem{theorem}{Theorem}
\newtheorem{lemma}{Lemma}
\newtheorem{definition}{Definition}
\newtheorem{corollary}{Corollary}
\newtheorem*{remark}{Remark}
\newtheorem{assumption}{Assumption}
\newtheorem{proposition}{Proposition}
\newcommand{\bigroman}[1]{\uppercase\expandafter{\romannumeral#1}}
\begin{document}

\title{On Inhomogeneous Infinite Products of Stochastic Matrices and Applications}

\author{Zhaoyue Xia, \IEEEmembership{Student Member, IEEE},
        Jun Du, \IEEEmembership{Senior Member, IEEE}, Chunxiao Jiang, \IEEEmembership{Fellow, IEEE}, \\ H. Vincent Poor, \IEEEmembership{Fellow, IEEE}, Zhu Han, \IEEEmembership{Fellow, IEEE}, Yong Ren, \IEEEmembership{Senior Member, IEEE}
         % <-this % stops a space
        %Lajos~Hanzo,~\IEEEmembership{Life~Fellow,~IEEE}
\thanks{Z. Xia, J. Du, and Y. Ren are with the Department of Electronic Engineering, Tsinghua University, Beijing, 100084, China, (e-mail: xiazy19@mails.tsinghua.edu.cn; \{jundu,reny\}@tsinghua.edu.cn).} % <-this % stops a space
\thanks{C. Jiang is with Tsinghua Space Center, Tsinghua University, Beijing, 100084, China. (e-mail: jchx@tsinghua.edu.cn).}
\thanks{H. V. Poor is with the Department of Electrical and Computer Engineering, Princeton University, Princeton, NJ 08544 USA, (e-mail: poor@princeton.edu).} 
\thanks{Z. Han is with the Department of Electrical and Computer Engineering at the University of Houston, Houston, TX 77004 USA, and also with the Department of Computer Science and Engineering, Kyung Hee University, Seoul, South Korea, 446-701, (e-mail: hanzhu22@gmail.com).}} % <-this % stops a space}

\maketitle
\begin{abstract}
  With the growth of magnitude of multi-agent networks, distributed optimization holds considerable significance within complex systems. Convergence, a pivotal goal in this domain, is contingent upon the analysis of infinite products of stochastic matrices (IPSMs). In this work, convergence properties of inhomogeneous IPSMs are investigated. The convergence rate of inhomogeneous IPSMs towards an absolute probability sequence $\pi$ is derived. We also show that the convergence rate is nearly exponential, which coincides with existing results on ergodic chains. The methodology employed relies on delineating the interrelations among Sarymsakov matrices, scrambling matrices, and positive-column matrices. Based on the theoretical results on inhomogeneous IPSMs, we propose a decentralized projected subgradient method for time-varying multi-agent systems with graph-related stretches in (sub)gradient descent directions. The convergence of the proposed method is established for convex objective functions, and extended to non-convex objectives that satisfy Polyak-Lojasiewicz conditions. To corroborate the theoretical findings, we conduct numerical simulations, aligning the outcomes with the established theoretical framework. 
\end{abstract}

\begin{IEEEkeywords}
Distributed consensus, distributed optimization, nonconvex optimization, multiagent systems.
\end{IEEEkeywords}

\section{Introduction}
As multi-agent systems become increasingly prevalent in various domains ranging from social networks to industrial production lines, the need for efficient and adaptable decentralized optimization techniques has increased in importance \cite{2023Event-Triggered,2023FullyDist,2023Positive,2023TrackingControlMultiagent,2023DistributedProximalGradient}. Among various fields, multi-agent systems with time-varying topologies have garnered significant attention in the area of decentralized optimization. Traditional centralized optimization techniques often struggle to adapt to dynamic environments due to their reliance on centralized decision-making and static network structures. In contrast, decentralized approaches leverage the collective intelligence and autonomy of individual agents within a network, enabling robustness, scalability, and adaptability to evolving graph structures. 

The fusion of decentralized multi-agent systems and time-varying topologies presents both theoretical and practical challenges in the field of optimization. Firstly, a notable theoretical gap persists in understanding the exceptional efficacy exhibited by first-order sampling, or by an abuse of terminology, distributed stochastic (sub)gradient descent (DSG), within time-varying graphs. In addition, how to develop an effective topology-dependent decentralized algorithm that ensures convergence and superior performance remains a challenge. 

This paper addresses these challenges by revisiting the theory related to inhomogeneous infinite products of stochastic matrices (IPSMs). Additionally, topology-dependent variations of gradient descent directions are introduced within the decentralized algorithm to promote both convergence and satisfactory performance. Based on ergodic theory, our method investigates the links between optimization objectives and mechanics of gradient descent. 

A distinctive contrast between centralized and decentralized optimization methodologies prominently emerges in the critical phase known as consensus. In the centralized setting, a common center is responsible for disseminating current optimization parameters to all agents. Conversely, within the decentralized framework, each agent is tasked with autonomously achieving consensus throughout the progression of the algorithm. Indeed, the essential condition for achieving consensus with respect to DSG methods fundamentally hinges on the specific attributes of the time-varying topology, which is in turn a subject related to ergodic theory. Since each weighted directed graph can be associated with a (row-)stochastic matrix by normalizing, understanding the asymptotic properties of inhomogeneous IPSMs stands as a significant area of investigation. Although characteristics of homogenous products of stochastic matrices are straightforward to obtain with the help of the Perron-Frobenius theorem and spectrum theory, analyzing their inhomogeneous counterpart revokes the more complex mathematical tool known as coefficients of ergodicity developed in \cite{1979CoefErgodicity}. Extensive work on inhomogenous products of stochastic matrices has focused on the exponential convergence of scrambling matrices \cite{1963ProdSIAMatrices} and positive-column matrices \cite{1977ExpConvProdStochasticMat} in the context of weak ergodicity, i.e., in the form of $\lim_{k\rightarrow \infty} A(k)\cdots A(2)A(1)$. However, a deeper understanding of the truncated product represented as $A(k)\cdots A(s+1)A(s)$ for any $s < k$ is in need to study the convergence property of corresponding decentralized algorithms, which calls for explicit estimates of $\Phi_A(s,k) = A(k)\cdots A(s+1)A(s)$.

The classical form of a standard projected DSG (PSG) method generates iterates $(\mathbf{x}_{i,k})_{i\in \mathcal{V}, k\in \mathbb{N}}$ through $
  \mathbf{x}_{i,k+1} = \mathcal{P}_{\mathcal{X}} \left[ \sum_{j=1}^n A_{ij}(k) \mathbf{x}_{j,k} - \alpha_k \mathbf{v}(\mathbf{x}_{i,k}) \right],
$
where $\mathcal{X}$ is the projection space and $\mathbf{v}(\mathbf{x}_{i,k})$ is a descent direction at $\mathbf{x}_{i,k}$ for the local objective function $f_i$. As a typical decentralized algorithm, such standard PSG method does not introduce any topological variation in the descent direction. As a consequence, an intuitive motivation arises suggesting that leveraging graph information within desent directions could potentially accelerate convergence rates. Within the realm of machine learning, introducing topology-variant stretches to the gradient direction substantiates the rationale behind incorporating a topology-related regularization term into the optimization objective. This perspective, from a geometrical standpoint, effectively transforms the topology-invariant solution space into a dynamic, time-varying space. 

Based on the analysis and motivation above, we summarize our contributions of this work as follows:
\begin{itemize}
  \item We present results on the convergence of inhomogeneous IPSMs under weaker conditions than \cite{1963ProdSIAMatrices,2005AgreementConsensusProblems}. Specifically, the stochastic matrices neither are selected from any compact set $P$ nor belong to the $S(\delta)$ category\footnote{A matrix $A\in S(\delta)$ means that all nonzero elements are larger than or equal to $\delta$ for some constant $\delta>0$.}. 
  \item The convergence rate for the truncated product $\Phi(s,k)=A(k)\cdots A(s+1)A(s)$ towards an absolute probability sequence $\pi(s)$ is derived. We also show that the convergence rate is nearly exponential, which paves the way for stronger conclusions in the vanishing step size regime.
  \item We investigate the asymptotic behavior of the absolute probability sequence when the stochastic matrix sequence tends to the identity matrix. Theoretical analysis shows that the absolute probability sequence $\pi(k)$ tends to $\frac{1}{n}\mathbf{1}^T$.
  \item A decentralized projected subgradient method is developed which adds graph-related stretches to the desent directions. The convergence to the global minima of non-smooth convex objective functions is established based on the results on inhomogeneous IPSMs. Moreover, we generalize the convergence results to the case where objective functions are non-convex and satisfy the Polyak-Lojasiewicz (PL) conditions. 
  \item Numerical simulations are conducted to validate the theoretical findings established in this study. Moreover, the outcomes of these simulations affirm that the incorporation of graph-related stretches into the descent directions leads to notably enhanced performance compared to employing the standard decentralized PSG method. 
\end{itemize}

The rest of this article is organized as follows: we first review related works on inhomogeneous IPSMs and DSG over time-varying graphs in Section \ref{sec:realted-works}, then present our theoretical results in Section \ref{sec:theoretical-results}. Based on these results, we propose a decentralized subgradient method and establish its convergence in Section \ref{sec:app}. Numerical simulation results are presented in Section \ref{sec:simulation}, and Section \ref{sec:conclusion} concludes this article.

\vspace{-6pt}
\section{Related Works}
\label{sec:realted-works}
Considerable research efforts have been dedicated to exploring inhomogeneous IPSM. In contrast, there are notably fewer studies focusing on DSG within the scope of time-varying graphs. In the subsequent discussion, we aim to provide an overview of pertinent literature concerning these themes. 

The pioneering work of Hajnal \cite{1958hajnal_bartlett} shows that a Markov chain is weakly ergodic if the set of transition matrices is compact and Markov. Wolfowitz \cite{1963ProdSIAMatrices} extended the result to a category of matrices known as SIA\footnote{A matrix $A$ is said to be stochastic, indecomposable and aperiodic (SIA) if $\lim_{n\rightarrow \infty} A^n = \mathbf{1}c^T$ where $\mathbf{1}$ is the vector of all $1$'s.} matrices, thereby establishing a fundamental connection between weak ergodicity and the left convergence of matrix products. The result of Wolfowitz states that if all finite products of matrices in a given finite set are SIA, then the Markov chain with transition matrices in this set is weakly ergodic. The finiteness condition for the set can be replaced with compactness according to the latter result \cite{1977ExpConvProdStochasticMat}. Since the condition is not easy to check in practice, further studies have focused on $S(\delta)$ stochastic matrices with nonzero diagonal elements. In \cite{2005AgreementConsensusProblems}, a uniform bound $N$ on the matrix product lengths is set such that if every $N$ products of $S(\delta)$ matrices is SIA, then the Markov chain is weakly ergodic. The uniform bound is then removed in \cite{2008CondWeakErg,2016GraphicalDecompCriterion} by introducing disjoint strongly connected components and directed spanning trees. It is worth noting that the method called infinite flow introduced in \cite{2009DSG,2011OnErgodicity,2012OnApproxErg} can be viewed as a graph-level implementation of the property of recurrence of a measure-preserving transformation in the domain of ergodic theory according to Poincaré's recurrence theorem. The most recent works include consideration of generalized stochastic matrices \cite{2020ProdGeneralizedSM} and generalized Sarymsakov matrices \cite{2019GeneralizedSarymsakov}.

Most recent studies of DSG within time-varying graphs depend on dual problem formulation \cite{2020OptimalDistributed,2022OnlineLearningAlgorithm}, proximal gradient algorithms \cite{2021OnlineLearning,2023DistributedProximalGradient} or gradient tracking \cite{2023DistributedContinuousTime,2020AnalysisDesign,2023GTDistributedOptimization}. An underlying assumption of dual problem formulation is that the graph should be undirected and connected to ensure that the graph Laplacian is symmetric and positive semidefinite. The formulation of dual problems naturally introduces minimization problems involving the local objective functions in the generation of iterates, which complicates the update of local iterates especially when local objectives are non-convex. The essence of proximal gradient algorithms lies in dividing the objective function into two parts: a smooth non-convex part and the other non-smooth regular part. Gradient tracking enables agents to reconstruct a progressively estimate of the whole gradient of the local objective functions, while it doubles the computation needed to record an auxiliary variable for each agent. In this article, we will discuss the most general form of DSG which allows further extensions to the aforementioned approaches.

Distinguished from existing studies, our attention is directed towards $S(\delta_t)$ matrices, which allow the nonzero elements to asymptotically tend to $0$ as time increases. We will show, in Section \bigroman{3}, that the consideration of such time-varying lower bounds is necessary for establishing the convergence of the decentralized algorithm with graph-related gradient stretches. 

\begin{figure}[t]
  \centering
  \includegraphics[width=0.48\textwidth]{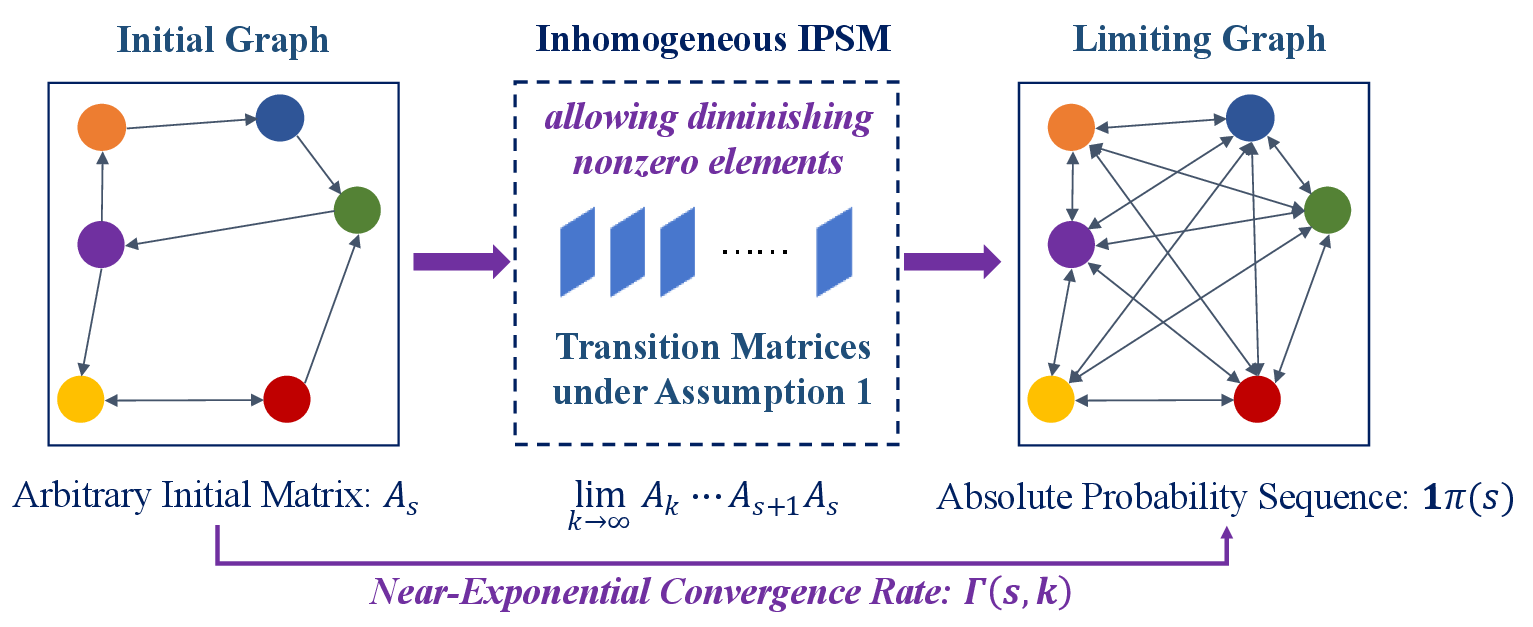}
  \caption{The evolution of inhomogeneous IPSM.}
  \label{fig:system-model}
\end{figure}
\vspace{-6pt}
\section{Theoretical Results on Infinite Products of Stochastic Matrices}
\label{sec:theoretical-results}

Throughout the article, our discussion focuses on a multi-agent system composed of $n$ agents with time-varying topology. We specifically examine scenarios in which the progression of the corresponding weighted directed graph is Markovian, and each transition of the system's topology is representable through a left matrix product. Hence, the asymptotic behavior of the graph is directly linked with the convergence properties of inhomogeneous IPSMs as depicted in Fig. \ref{fig:system-model}. 

\vspace{-6pt}
\subsection{Convergence of Inhomogeneous IPSMs}
To begin with, we introduce the definitions of the Sarymsakov matrix and the scrambling matrix to be used later:
\begin{definition}[Sarymsakov Matrix \cite{1961SarymsakovMatrix}]
  Given a non-negative $r\times r$ matrix $A$ and a set $S \subset \{1,2,\dots,r\}$, the consequent function $F_A(S)$ is given by
  \begin{equation}
    \label{eqn:consequent-function}
    F_A(S) \triangleq \left\{ j: \exists i \in S, \space A_{ij}>0 \right\}.
  \end{equation}
  A non-negative matrix $A$ is called a Sarymsakov matrix if for any two disjoint set $S$ and $S'$,
  \begin{equation}
    \label{eqn:Sarymsakov-condition}
    F_A(S) \cap F_A(S') \neq \emptyset \text{  or  } |F_A(S) \cup F_A(S')| > |S|+|S'|.
  \end{equation}
\end{definition}

\begin{definition}[Scrambling Matrix \cite{1963ProdSIAMatrices}]
  A non-negative matrix $A$ is called scrambling if for any pair of rows $(i,j)$, there is a column $k$ such that $A_{ik} > 0$ and $A_{jk} > 0$.
\end{definition}

Our methodology employed relies on the underlying connections among Sarymsakov matrices, scrambling matrices, and positive-column matrices. In this article, we adopt the following assumption on the transition matrices:
\begin{assumption}
  \label{asp:graph}
  $\{A(t)\}$ is a sequence of $n\times n$ stochastic matrices that satisfy 
  \begin{enumerate}[label=\alph*)]
    \item The diagonal elements of $A(t)$ are positive, i.e., $A_{ii}(t) > 0, \forall i$ for all $t$.
    \item For each $A(t)$, the following statement holds: $\forall S \subsetneq \{1,2,\dots,n\}, \exists (i,j) \in S\times \bar{S}$ such that $A_{ij}(t)>0$.
    \item Denote the shortest communication interval by $B=(n-1)\lceil \log_2 n \rceil$. There exists a positive sequence $\{\beta_t\}, 0 < \beta_t < 1$ and a constant $\delta > 0$ satisfying
    \begin{equation}
      \min_{(s-1)B \le t < sB} \beta_t \ge \left(\frac{\delta}{(s+1)^\lambda}\right)^{1/B}, s\in \mathbb{N}, 0<\lambda<1,
    \end{equation}
    such that $A_{ij}(t) \ge \beta_t$ whenever $A_{ij}(t)>0$ for all $t$. 
  \end{enumerate}
\end{assumption}

\begin{remark}
  Note that Assumption \ref{asp:graph}b) is equivalent to the irreducibility of $A(t)$. To see this, consider the associated digraph. When a stochastic matrix $A$ is irreducible, the associated digraph with respect to $A$ is strongly connected. Hence for any set $S$ of nodes in the graph, there exists an edge from some node in $S$ to a node in $\bar{S}$ and vice versa. As for the other direction, we use contradiction. Under Assumption \ref{asp:graph}b), suppose $A$ is reducible. Then there exists some permutation matrix $P$ such that $PAP^T$ is block upper-triangular with a $K \times (n-K)$ zero block. Denote the permutation with respect to $P$ by $\sigma_P$. Letting $S = \{\sigma_P^{-1}(1), \sigma_P^{-1}(2), \dots, \sigma_P^{-1}(K)\}$, we have $\forall (i,j) \in S \times \bar{S}$, $A_{ij}=0$, which contradicts Assumption \ref{asp:graph}b). Therefore, the equivalence is established.
\end{remark}

To derive further results, a lemma on the convergence rate of $\prod_{t=1}^k (1-x_t)$-type sequence is useful.
\begin{lemma}
  \label{lemma:conv-product-non-summable-sequence}
  Let $\{x_k\} \in \mathbb{R}$ be a non-summable sequence and $0<x_k<1, \forall k$. Then $\prod_{k=1}^\infty (1-x_k) = 0$. Moreover, assuming $\lim_{k\rightarrow \infty} k x_k = \infty$, we obtain
  \begin{enumerate}
    \item $\sum_{k=1}^\infty \prod_{t=1}^k (1-x_t) < \infty$.
    \item $\sum_{k=1}^\infty \sum_{r=k}^\infty \prod_{t=1}^r (1-x_t)<\infty$.
    \item $\lim_{k\rightarrow \infty}k^\mu \sum_{r=k}^\infty \prod_{t=1}^r (1-x_t) = 0$, for any $\mu \in \mathbb{R}$.
  \end{enumerate}
\end{lemma}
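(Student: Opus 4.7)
My plan is to reduce everything to a single technical estimate: under the hypothesis $kx_k\to\infty$, the partial product $P_k := \prod_{t=1}^k (1-x_t)$ decays faster than any polynomial. Once this is established, all three conclusions follow by routine comparisons with $p$-series.

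First I would dispose of the base claim $\prod_{k=1}^\infty (1-x_k)=0$ using the standard inequality $\log(1-x)\le -x$ for $x\in(0,1)$, which yields $\log P_k \le -\sum_{t=1}^k x_t \to -\infty$ since $\{x_k\}$ is non-summable. Next, I would prove the key estimate: for every $M>0$ there is a constant $C_M$ such that $P_k\le C_M/k^M$ for all sufficiently large $k$. The argument is: given $M$, the hypothesis $kx_k\to\infty$ lets me pick $K$ with $x_t\ge M/t$ for $t\ge K$; then
\begin{equation*}
\log P_k \le \log P_{K-1} - \sum_{t=K}^k x_t \le \log P_{K-1} - M\sum_{t=K}^k \frac{1}{t},
\end{equation*}
and bounding $\sum_{t=K}^k 1/t \ge \log((k+1)/K)$ by integral comparison gives $P_k \le (P_{K-1}K^M)\,(k+1)^{-M}$, as desired.

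With this estimate in hand, the three items are short. For (1), take $M=2$ so $\sum_k P_k \le \sum_k C_2/k^2 <\infty$. For (2), swap the order of summation to get $\sum_{k=1}^\infty \sum_{r=k}^\infty P_r = \sum_{r=1}^\infty r\,P_r$, then take $M=3$ so $rP_r \le C_3/r^2$ is summable. For (3), apply the estimate with any $M>\mu+1$ and compare the tail with $\int_k^\infty t^{-M}\,dt = k^{-(M-1)}/(M-1)$, giving $k^\mu \sum_{r=k}^\infty P_r = O(k^{\mu-M+1}) \to 0$.

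The only mildly subtle step is the polynomial decay estimate, since the hypothesis $kx_k\to\infty$ is quantitative but not uniform, so the threshold $K$ depends on $M$ and the constant $C_M$ absorbs the initial segment $P_{K-1}$. Everything else is bookkeeping: Fubini for the double sum in (2) and a geometric/integral tail bound in (3). No coefficients-of-ergodicity machinery or anything from earlier in the paper is needed, so the lemma is self-contained and serves purely as an analytic tool for the later convergence-rate statements about $\Phi_A(s,k)$.
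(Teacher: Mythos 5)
Your proof is correct and rests on the same core mechanism as the paper's: since $kx_k\to\infty$, eventually $x_t\ge M/t$, so the logarithmic growth of the harmonic sum forces $\prod_{t\le k}(1-x_t)$ to decay faster than any fixed power of $k$ (the paper instantiates this with $\varepsilon=3$ via the Euler--Mascheroni bound, you state it as a uniform estimate $P_k\le C_M k^{-M}$), after which all three items are $p$-series comparisons. Your packaging is slightly cleaner -- notably the Tonelli swap reducing item 2 to $\sum_r rP_r$, and taking $M>\mu+1$ in item 3 where the paper loosely says ``set $\varepsilon=\mu$'' -- but it is not a different route.
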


\begin{proof}
  Let $y_k = \prod_{t=1}^k (1-x_t)$. By the monotone convergence theorem, there exists $0 \le \varepsilon < 1$ such that $y_k \downarrow \varepsilon$ as $k\rightarrow \infty$. We then prove by contradiction. Suppose $\varepsilon > 0$. We have 
  \begin{equation*}
    \ln \varepsilon \le \ln y_k = \sum_{t=1}^k \ln (1-x_t) \le - \sum_{t=1}^k x_t, \quad \forall k,
  \end{equation*}
  which contradicts the fact that $\{x_k\}$ is non-summable. To show the corresponding series is convergent, we use a standard methodology of real analysis. Denote the Euler-Mascheroni constant by $\gamma$. Since $\lim_{k\rightarrow \infty} kx_k = \infty$, we have for $\varepsilon = 3$ and a constant $F=\gamma+1/\varepsilon$, there exists a natural number $M = M(\varepsilon, F)$ such that $tx_t\ge \varepsilon$ and $|\sum_{i=1}^t 1/i - \ln t| \le F$ for all $t \ge M$. Thus
  \begin{equation*}
    \begin{aligned}
      \sum_{k=M}^\infty \prod_{t=1}^k (1-x_t) &\le \sum_{k=M}^\infty \exp\left\{ - \sum_{t=M}^k x_t \right\} \\
      &= \sum_{k=M}^\infty \exp\left\{ - \sum_{t=M}^k \frac{tx_t}{t} \right\} \\
      &\le \sum_{k=M}^\infty \exp\left\{ \varepsilon (\ln M + 2F - \ln k) \right\} \\
      &= M^3 e^{6F} \sum_{k=M}^\infty k^{-3} < \infty.
    \end{aligned}
  \end{equation*}
  Therefore, we conclude that $\sum_{k=1}^\infty \prod_{t=1}^k (1-x_t) < \infty$. To prove the Part 2), it is sufficient to observe that
  \begin{equation*}
    \sum_{k=1}^{M} \sum_{r=k}^\infty \prod_{t=1}^r (1-x_t) \le M \sum_{k=1}^\infty \prod_{t=1}^k (1-x_t) < \infty,
  \end{equation*}
  and in addition, for all $k \ge M+1$ we have
  \begin{equation}
    \label{eqn:sum-product-argument}
    \begin{aligned}
      \sum_{r=k}^\infty \prod_{t=1}^r (1-x_t) &\le \sum_{r=k}^\infty \exp\left\{ - \sum_{t=M}^{k-1} x_t - \sum_{t=k}^r x_t \right\}\\
      &\le k^3 e^{6F} \sum_{r=k}^\infty r^{-3} \exp\left\{ - \sum_{t=M}^{k-1}x_t \right\}\\
      &\le M^3 e^{12F} \frac{k^2}{(k-1)^4}.
    \end{aligned}
  \end{equation}
  Then we obtain 
  \begin{equation*}
    \sum_{k=M+1}^{\infty} \sum_{r=k}^\infty \prod_{t=1}^r (1-x_t) \le M^3 e^{12F} \sum_{k=M+1}^{\infty} \frac{k^2}{(k-1)^4} < \infty.
  \end{equation*}
  The last part follows the observation that it is sufficient to consider the case where $\mu>0$ and we can always set $\varepsilon = \mu$. By repeating the argument of (\ref{eqn:sum-product-argument}), the proof is completed.
\end{proof}

\begin{corollary}
  \label{col:xtk}
  Let $\{x_k\} \in \mathbb{R}$ be a non-summable sequence and $0<x_k<1, \forall k$ and $\lim_{k\rightarrow \infty} k x_k = \infty$. If $\{y_k\} \in \mathbb{R}^+$ is a summable sequence, we have
  \begin{equation}
    \label{eqn:cor-original-eq}
    \sum_{k=1}^\infty \sum_{r=k}^\infty \prod_{t=1}^r (1-x_{t+k}) y_k < \infty.
  \end{equation}
\end{corollary}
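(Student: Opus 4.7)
The plan is to run the same kind of estimate that appeared in (\ref{eqn:sum-product-argument}) within the proof of Lemma \ref{lemma:conv-product-non-summable-sequence}, only now applied to the index-shifted products $\prod_{t=1}^r(1-x_{t+k})$. First I would fix a constant $\varepsilon>1$ and use $kx_k\to\infty$ together with the Euler-type estimate $|\sum_{i=1}^t 1/i-\ln t|\le F$ to locate an integer $M=M(\varepsilon,F)$ such that $sx_s\ge\varepsilon$ and the harmonic bound both hold for $s\ge M$. I would then split the outer sum into a head $\sum_{k=1}^{M}$ and a tail $\sum_{k=M+1}^{\infty}$.

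For the head, I would observe that for every fixed $k\le M$ the shifted sequence $\{x_{t+k}\}_{t\ge 1}$ still satisfies the hypotheses of Lemma \ref{lemma:conv-product-non-summable-sequence} (non-summability and $t\,x_{t+k}\to\infty$), so Part 1 of that lemma gives $\sum_{r\ge k}\prod_{t=1}^{r}(1-x_{t+k})<\infty$. The head is thus a finite sum of finite inner tails multiplied by the bounded factors $y_k$, and contributes a finite amount.

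For the tail I would apply $1-u\le e^{-u}$ together with $x_s\ge\varepsilon/s$ and $\sum_{s=k+1}^{k+r}1/s\ge\ln((k+r)/k)-2F$ to obtain the pointwise estimate
\[
\prod_{t=1}^{r}(1-x_{t+k})\le e^{2F\varepsilon}\left(\frac{k}{k+r}\right)^{\varepsilon},
\]
and then, via an integral comparison $\sum_{u\ge 2k}u^{-\varepsilon}\le (2k-1)^{1-\varepsilon}/(\varepsilon-1)$, deduce
\[
\sum_{r=k}^{\infty}\prod_{t=1}^{r}(1-x_{t+k})\le\frac{e^{2F\varepsilon}}{\varepsilon-1}\cdot\frac{k^{\varepsilon}}{(2k-1)^{\varepsilon-1}}.
\]
Multiplying this by $y_k$ and summing over $k>M$, combined with the summability of $\{y_k\}$, would yield finiteness of the double sum.

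The main obstacle is that, unlike in (\ref{eqn:sum-product-argument}) where the ``past'' factor $\exp(-\sum_{t=M}^{k-1}x_t)$ provided an additional $O(1/k^\varepsilon)$ decay and produced the sharp $O(1/k^2)$ bound on $\sum_{r\ge k}P_r$, the shifted product $\prod_{t=1}^{r}(1-x_{t+k})$ begins at index $x_{k+1}$, so that past-decay contribution is absent. A direct repetition of the argument therefore gives an inner-tail bound that only scales like $k$, and this prefactor must be absorbed against the summable $\{y_k\}$. The delicate part is exploiting the full strength of $kx_k\to\infty$—for instance by letting $\varepsilon=\varepsilon_k$ grow with $k$, so that $k^{\varepsilon_k}/(2k-1)^{\varepsilon_k-1}$ shrinks fast enough to be matched against $y_k$—and verifying that the resulting prefactor combines with the summability of $\{y_k\}$ to close the estimate.
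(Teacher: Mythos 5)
Your head/tail split is sound as far as it goes: for each fixed $k\le M$ the shifted sequence $\{x_{t+k}\}_{t\ge1}$ indeed inherits non-summability and $t\,x_{t+k}\to\infty$, so Part 1) of Lemma \ref{lemma:conv-product-non-summable-sequence} disposes of the head. The gap is exactly the step you defer at the end, and it is not a deferrable technicality. Your tail estimate gives $\sum_{r\ge k}\prod_{t=1}^{r}(1-x_{t+k})\le C_\varepsilon\,k^{\varepsilon}/(2k-1)^{\varepsilon-1}\asymp C_\varepsilon\,k$ for each fixed $\varepsilon>1$, and summability of $\{y_k\}$ does not control $\sum_k k\,y_k$ (take $y_k=1/(k\ln^2k)$), so nothing is proved yet. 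The proposed repair of letting $\varepsilon=\varepsilon_k\to\infty$ is not licensed by the hypothesis: $kx_k\to\infty$ carries no rate, so $\varepsilon_k:=\inf_{s>k}sx_s$ may grow arbitrarily slowly, and the resulting bound is only $\sum_{r\ge k}\prod_{t=1}^{r}(1-x_{t+k})\lesssim k\,2^{-\varepsilon_k}=o(k)$, which a general summable $\{y_k\}$ still cannot absorb. (As a side issue, your written prefactor $e^{2F\varepsilon_k}$ also blows up faster than $(2k-1)^{-(\varepsilon_k-1)}k^{\varepsilon_k}$ shrinks, since $e^{2F}>2$; the harmonic-sum detour can be avoided, but that does not rescue the main step.)

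Moreover, the obstruction is genuine rather than an artifact of your estimate: under only the stated hypotheses the inner tail really can be of order $k$ up to slowly varying corrections. Take $x_s=\min\{1/2,\ \ln\ln\ln s/s\}$; then $0<x_s<1$, $\{x_s\}$ is non-summable and $sx_s\to\infty$, yet for $k\le r\le 2k$ one has $\sum_{s=k+1}^{k+r}x_s\le(\ln 3+o(1))\ln\ln\ln(3k)$, hence $\prod_{t=1}^{r}(1-x_{t+k})\ge(\ln\ln k)^{-2}$ for large $k$ and $\sum_{r\ge k}\prod_{t=1}^{r}(1-x_{t+k})\ge k(\ln\ln k)^{-2}$. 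Pairing this with the summable choice $y_k=1/(k\ln^2k)$ makes the double sum in (\ref{eqn:cor-original-eq}) diverge, so no refinement of a pointwise bound of your type can close the argument from the stated hypotheses alone; one needs additional quantitative input, e.g.\ an explicit rate such as $x_t\ge c\,t^{-\lambda}$ with $\lambda<1$ (which is what actually holds for the sequences $\gamma^s_t$ this corollary is applied to later) or the stronger requirement $\sum_k k\,y_k<\infty$. Note also that the paper's own argument takes a different route entirely, exchanging the roles of the two indices and using that $\sum_{k\ge r}y_k\to0$, rather than estimating the shifted products directly; your last paragraph has correctly identified where the direct route loses the decay, but as it stands the proof is not complete.
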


\begin{proof}
  Note that (\ref{eqn:cor-original-eq}) can be rearranged as
  \begin{equation}
    \sum_{k=1}^\infty \sum_{r=k}^\infty \prod_{t=1}^r (1-x_{t+k}) y_k = \sum_{r=1}^\infty \prod_{t=1}^r \sum_{k=r}^\infty y_k(1-x_{t+k}).
  \end{equation}
  It is sufficient to consider the case $x_t \rightarrow 0$. Firstly, for any vanishing positive sequence $\{\beta_{t,r}\}$ with $\beta_{t,r} \rightarrow 0$ as $t\rightarrow \infty$ and $r\ge t$, we have $\sum_{r=1}^\infty \prod_{t=1}^r \beta_{t,r} < \infty$ by Lemma \ref{lemma:conv-product-non-summable-sequence}. The conclusion is straightfoward since we can always choose a sufficiently large $M$ such that $\beta_{t,r} \le 1-x_t$ for all $r \ge t \ge M$. Since $y_k$ is summable, we have for $r\ge t$,
  \begin{equation}
    \limsup_{t \rightarrow \infty} \sum_{k=r}^\infty y_k (1-x_{t+k}) \le \limsup_{r\rightarrow \infty} \sum_{k=r}^\infty y_k = 0.
  \end{equation}
  Hence, it follows that
  \begin{equation}
    \sum_{r=1}^\infty \prod_{t=1}^r \left( \sum_{k=r}^\infty y_k(1-x_{t+k}) \right) < \infty,
  \end{equation}
  which completes the proof.
\end{proof}

% \begin{property}
%   Let $\{A(k)\} \in \mathbb{R}^{n\times n}$ be a sequence of non-negative matrices with positive diagonal entries. Given a permutation $\sigma: \mathbb{N} \rightarrow \mathbb{N}$, a permuted product $\Phi_\sigma(s,k), s \le k$ of $\{A(k)\}$ is defined as 
%   \begin{equation}
%     \Phi_\sigma(s,k) \triangleq A(\sigma(k)) \cdots A(\sigma(s+1)) A(\sigma(s)).
%   \end{equation}
%   Then
%   \begin{enumerate}[label=\alph*)]
%     \item $A_{ij}(t)>0$ $\Longrightarrow$ $[\Phi_\sigma(s,k)A(t)]_{ij}>0, [A(t)\Phi_\sigma(s,k)]_{ij}>0$ for all $i,j, \sigma, t,s,k$, $s \le k$.
%     \item Suppose each $A(t)$ is compliant with a strongly connected graph $\mathscr{G}_t$ without self-loops. There exists a constant $B$ such that $[\Phi_\sigma(s,s+B)]_{ij} > 0$ for all $i,j,s,\sigma$.
%   \end{enumerate}
% \end{property}

Indeed, any matrix satisfying Assumption \ref{asp:graph} is a Sarymsakov matrix based on the following lemma: 
\begin{lemma}
  \label{lemma:Sarymsakov-Matrix}
  If a non-negative $r\times r$ matrix $A$ with positive diagonal entries satisfies that $\forall S\subsetneq \{1,2,\dots,r\}$, $\exists (i,j)\in S\times \bar{S}$ such that $A_{ij}>0$, then $A$ is a Sarymsakov matrix.
\end{lemma}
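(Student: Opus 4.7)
My plan is to argue by contradiction, assuming there exist disjoint (non-empty) sets $S,S'\subseteq\{1,\dots,r\}$ for which both Sarymsakov alternatives fail, i.e.\ $F_A(S)\cap F_A(S')=\emptyset$ \emph{and} $|F_A(S)\cup F_A(S')|\le |S|+|S'|$, and then extract a violation of the hypothesis on $A$. The two ingredients I expect to drive the argument are (i) positivity of the diagonal, which forces $S\subseteq F_A(S)$ and $S'\subseteq F_A(S')$, and (ii) the disjointness of $F_A(S)$ and $F_A(S')$, which turns the union bound into an additive one.

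In detail, the first step will be to combine the inclusions $S\subseteq F_A(S)$ and $S'\subseteq F_A(S')$ (both immediate from $A_{ii}>0$) with disjointness to get
\[
|S|+|S'|\le |F_A(S)|+|F_A(S')|=|F_A(S)\cup F_A(S')|\le |S|+|S'|.
\]
Thus every inequality is an equality. Since $S\subseteq F_A(S)$ and both are finite of the same cardinality, this forces $F_A(S)=S$, and likewise $F_A(S')=S'$. In other words, the rows of $A$ indexed by $S$ have positive entries only in columns indexed by $S$, so $A_{ij}=0$ for all $(i,j)\in S\times\bar S$.

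The final step is to observe that $S$ is a \emph{proper} subset of $\{1,\dots,r\}$, because $S'$ is non-empty and disjoint from $S$. But then the conclusion $A_{ij}=0$ on all of $S\times\bar S$ directly contradicts the standing hypothesis that for every proper subset there exists some $(i,j)\in S\times\bar S$ with $A_{ij}>0$. This contradiction shows that at least one of the two Sarymsakov alternatives must hold, completing the proof.

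I do not expect a real obstacle here; the only subtlety is the book-keeping of edge cases (empty $S$ or $S'$, or $S\cup S'=\{1,\dots,r\}$), which are easily dispatched: the Sarymsakov condition is stated for pairs of non-empty disjoint sets, and even when $S\cup S'$ exhausts the whole index set, $S$ remains a proper subset, so the hypothesis applies.
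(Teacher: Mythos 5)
Your proof is correct and uses the same two ingredients as the paper's own argument (positive diagonal gives $S\subseteq F_A(S)$, and the edge condition on the proper subset $S$ forces $F_A(S)$ to strictly exceed $S$ when the consequent sets are disjoint); the paper argues directly while you phrase it as a contradiction, but the substance is identical. No gaps.
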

\begin{proof}
  Consider any two disjoint sets $S$ and $S'$, where $S,S' \subset \{1,2,\dots,r\}$. Since the diagonal entries of $A$ are positive, we have $S \subset F_A(S)$ and hence $|F_A(S)|\ge |S|$ for any $S$. If $F_A(S)\cap F_A(S') \neq \emptyset$, then Condition (\ref{eqn:Sarymsakov-condition}) is satisfied. Otherwise we suppose $F_A(S)\cap F_A(S') = \emptyset$. Since $\forall S \subsetneq \mathcal{V}$, $\exists (i,j)\in S \times \bar{S}$ such that $A_{ij}>0$, we obtain $|F_A(S)|>|S|$. Therefore,  
  \begin{equation*}
    |F_A(S) \cup F_A(S')| = |F_A(S)| + |F_A(S')| > |S|+|S'|,
  \end{equation*}
  which completes the proof.
\end{proof}

The following lemma establishes our main result on the convergence rates of inhomogeneous IPSMs under Assumption \ref{asp:graph}:

\begin{lemma}
  \label{lemma:positive-column}
  Consider a sequence $\{D(k)\}$ of $r\times r$ stochastic matrices for which there exists a positive non-summable sequence $\{\gamma_k\}$ satisfying $\lim_{k\rightarrow \infty} k\gamma_k = \infty$ such that
  \begin{equation}
    \label{eqn:positive-column-condition}
    \forall k, \exists j^*, \text{ s.t. } D_{ij^*}(k) \ge \gamma_k, \quad \forall i.
  \end{equation}
  Then $\lim_{k\rightarrow \infty} D(k)\cdots D(2)D(1)$ exists. Moreover, the convergence rate can be expressed by sum of products of $1-\gamma_k$.
\end{lemma}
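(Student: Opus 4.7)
The natural approach is to study the oscillation of each column of the partial product $\Phi(1,k)=D(k)\cdots D(1)$ via a Dobrushin-style contraction driven by the positive-column hypothesis. Concretely, I would introduce, for each column $j$, the extremes $M_k^j=\max_i \Phi(1,k)_{ij}$ and $m_k^j=\min_i \Phi(1,k)_{ij}$, and aim to show that the oscillation $M_k^j-m_k^j$ decays at least as fast as $\prod_{t=1}^k(1-\gamma_t)$. Monotonicity comes for free: since $\Phi(1,k+1)_{ij}=\sum_\ell D(k+1)_{i\ell}\Phi(1,k)_{\ell,j}$ is a convex combination of $\{\Phi(1,k)_{\ell,j}\}_\ell$, we immediately get $m_k^j\uparrow$, $M_k^j\downarrow$, and the nesting $\Phi(1,r)_{ij}\in[m_r^j,M_r^j]\subseteq[m_k^j,M_k^j]$ for all $r\ge k$.

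The crux is then the one-step contraction
\[
M_{k+1}^j-m_{k+1}^j\le(1-\gamma_{k+1})(M_k^j-m_k^j),
\]
which I would derive by writing, for arbitrary rows $i_1,i_2$,
\[
\Phi(1,k+1)_{i_1,j}-\Phi(1,k+1)_{i_2,j}=\sum_\ell\bigl[D(k+1)_{i_1,\ell}-D(k+1)_{i_2,\ell}\bigr]\Phi(1,k)_{\ell,j},
\]
and splitting each pair into a shared mass $a_\ell=\min(D(k+1)_{i_1,\ell},D(k+1)_{i_2,\ell})$ plus non-negative residuals. The positive-column assumption (\ref{eqn:positive-column-condition}) at step $k+1$ forces $a_{j^*}\ge\gamma_{k+1}$ and hence $\sum_\ell a_\ell\ge\gamma_{k+1}$, so each residual sum is at most $1-\gamma_{k+1}$; the identity above is then bounded by $(1-\gamma_{k+1})(M_k^j-m_k^j)$, and maximizing over $i_1,i_2$ yields the claim.

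Iterating gives $M_k^j-m_k^j\le\prod_{t=1}^k(1-\gamma_t)$, which vanishes by the first part of Lemma \ref{lemma:conv-product-non-summable-sequence}, using only non-summability of $\{\gamma_k\}$. Combined with monotonicity, $m_k^j$ and $M_k^j$ converge to a common limit $c^j$, so $\Phi(1,k)_{ij}\to c^j$ for every $i,j$, and the limit matrix is $\mathbf{1}c^T$. For a sum-of-products bound on the rate, I would telescope: since $\Phi(1,r+1)_{ij}$ and $\Phi(1,r)_{ij}$ both lie in $[m_r^j,M_r^j]$, successive differences satisfy $|\Phi(1,r+1)_{ij}-\Phi(1,r)_{ij}|\le M_r^j-m_r^j$, whence
\[
|\Phi(1,k)_{ij}-c^j|\le\sum_{r=k}^\infty(M_r^j-m_r^j)\le\sum_{r=k}^\infty\prod_{t=1}^r(1-\gamma_t),
\]
and the right-hand side is finite (and vanishes as $k\to\infty$) by parts 1--3 of Lemma \ref{lemma:conv-product-non-summable-sequence}.

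The only nontrivial step I anticipate is the contraction inequality itself, which hinges on the careful decomposition of two arbitrary rows of $D(k+1)$ into a shared non-negative part of total mass at least $\gamma_{k+1}$ (supplied by the column $j^*$) and small residual parts; once this is in place, monotone convergence plus the previously established Lemma \ref{lemma:conv-product-non-summable-sequence} close both the existence of the limit and its rate of approach in a single stroke.
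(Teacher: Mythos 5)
Your proof is correct, and it reaches the conclusion by a recognizably different bookkeeping than the paper's. The paper fixes an arbitrary $\mathbf{x}_0$, decomposes the iterate $\mathbf{x}_k=D(k)\cdots D(1)\mathbf{x}_0$ as $\mathbf{y}_k+c_k\mathbf{1}$ with $c_k=\min_i\mathbf{x}_k^i$, proves the same Dobrushin-type contraction $\|\mathbf{y}_{k+1}\|_\infty\le(1-\gamma_{k+1})\|\mathbf{y}_k\|_\infty$ (splitting indices by the sign of $D_{ij}(k+1)-D_{pj}(k+1)$, which is your shared-mass decomposition in disguise), but then must separately establish convergence of the scalar sequence $c_k$: it is monotone and is shown bounded via $c_{k+1}\le c_k+\|\mathbf{y}_k\|_\infty$ together with summability of $\prod_{t\le k}(1-\gamma_t)$, which is where the hypothesis $\lim_k k\gamma_k=\infty$ enters even for existence, and which produces the tail term $\sum_{t\ge k}\prod_{s\le t}(1-\gamma_s)$ in the rate. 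Your columnwise max/min formulation makes the monotonicity and nesting of $[m_k^j,M_k^j]$ automatic, so existence of the limit follows from non-summability of $\{\gamma_k\}$ alone (oscillation $\to 0$), and the nesting in fact yields the sharper direct bound $|\Phi(1,k)_{ij}-c^j|\le M_k^j-m_k^j\le\prod_{t=1}^k(1-\gamma_t)$; your telescoped estimate $\sum_{r\ge k}\prod_{t\le r}(1-\gamma_t)$ is looser but matches the "sum of products" form of the paper's rate and is the only place you need $\lim_k k\gamma_k=\infty$ (via Lemma \ref{lemma:conv-product-non-summable-sequence}). In short: same contraction engine, but your route is slightly cleaner on the existence step and makes transparent which hypothesis buys which conclusion, while the paper's vector-iterate version is the one it reuses later (e.g., taking $\mathbf{x}_0=\mathbf{e}_j$ to read off $\Gamma(s,k)$ in Proposition \ref{prop:convergence-rates-backward-product}).
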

\begin{proof}
  Given an arbitrary vector $\mathbf{x} \in \mathbb{R}^r$, we can decompose it into $\mathbf{x} = \mathbf{y}+c\mathbf{1}$, where $c = \min \mathbf{x}^k$ is a scalar representing the minimum of the components of $\mathbf{x}$, and hence $\mathbf{y}$ is a non-negative vector with at least one zero entry. For any $\mathbf{x}_0\in \mathbb{R}^r$, we consider the following iteration for any $s\le k$:
  \begin{equation*}
    \mathbf{x}_k = D(k)\cdots D(2)D(1)\mathbf{x}_0.
  \end{equation*}
  Decompose $\mathbf{x}_k$ into $\mathbf{x}_k=\mathbf{y}_k + c_k \mathbf{1}$. We have
  \begin{equation*}
    \begin{aligned}
      c_{k+1} &= \min_i \mathbf{x}_{k+1}^i = \min_i \sum_{j=1}^r D_{ij}(k+1)\mathbf{x}_k^j \\
      &\ge \min_i \sum_{j=1}^r D_{ij}(k+1)c_k = c_k,
    \end{aligned}
  \end{equation*}
  where the last equality uses the fact that $D(k+1)$ is stochastic. Denoting the index of a zero entry of $\mathbf{y}_{k+1}$ by $p$, we have
  \begin{equation*}
    c_{k+1} = c_k + \sum_{j=1}^r D_{pj}(k+1)\mathbf{y}_k^j.
  \end{equation*} 
  Relying on this property of $\{c_k\}$, we obtain
  \begin{equation*}
    \mathbf{y}_{k+1}^i = \sum_{j=1}^r \left( D_{ij}(k+1)-D_{pj}(k+1) \right) \mathbf{y}_k^j.
  \end{equation*}
  To proceed, we need two useful sets:
  \begin{equation*}
    S_1 = \{j| D_{ij}(k+1)-D_{pj}(k+1)\ge 0\}, S_2 = \bar{S_1}.
  \end{equation*}
  Since $D(k+1)$ is a stochastic matrix, we have
  \begin{equation*}
    \sum_{j \in S_1} D_{ij}(k+1)-D_{pj}(k+1) = \sum_{j\in S_2} D_{pj}(k+1) - D_{ij}(k+1).
  \end{equation*}
  If $j^* \in S_2$, then we obtain
  \begin{equation*}
    \mathbf{y}_{k+1}^i \le \|\mathbf{y}_{k}\|_\infty \sum_{j \in S_1} D_{ij}(k+1) \le (1-\gamma_{k+1}) \|\mathbf{y}_{k}\|_\infty;
  \end{equation*}
  otherwise if $j^* \in S_1$, we consider
  \begin{equation*}
    \mathbf{y}_{k+1}^i \le \|\mathbf{y}_{k}\|_\infty \sum_{j \in S_2} D_{pj}(k+1) \le (1-\gamma_{k+1}) \|\mathbf{y}_{k}\|_\infty,
  \end{equation*}
  where the inequalities hold according to (\ref{eqn:positive-column-condition}). Since $i$ is arbitrary, it follows that
  \begin{equation}
    \label{eqn:yk-iterates}
    \|\mathbf{y}_{k+1}\|_\infty \le (1-\gamma_{k+1}) \|\mathbf{y}_k\|_\infty.
  \end{equation}
  By Lemma \ref{lemma:conv-product-non-summable-sequence}, we conclude that $\mathbf{y}_k \rightarrow \mathbf{0}$. Next we show the convergence of $\{c_k\}$ to some $c=c(\mathbf{x}_0)$. Clearly, we have
  \begin{equation*}
    c_{k+1} \le c_k + \|\mathbf{y}_k\|_\infty.
  \end{equation*}
  Unrolling the iterates, we obtain
  \begin{equation*}
    \begin{aligned}
      c_{k+1} &\le c_0 + \sum_{j=0}^k \|\mathbf{y}_j\|_\infty \\
      &\le c_0 + \|\mathbf{y}_0\|_\infty + \sum_{j=1}^k \prod_{i=1}^j (1-\gamma_i) \|\mathbf{y}_0\|_\infty,
    \end{aligned}
  \end{equation*}
  which by Lemma \ref{lemma:conv-product-non-summable-sequence} is uniformly upper bounded. Hence $\{c_k\}$ is convergent by the monotone convergence theorem, and the limit depends on $\mathbf{x}_0$. In summary, for any $\mathbf{x}_0$, $\mathbf{x}_k$ converges to $c(\mathbf{x}_0)\mathbf{1}$, i.e., $\lim_{k\rightarrow \infty} D(k)\cdots D(2)D(1)$ exists. 

  To derive the convergence rate, it is sufficient to investigate the evolution of $\mathbf{x}_k$ when $\mathbf{x}_0=\mathbf{e}_j$. Denote the backward product of $\{D(t)\}$ from $1$ to $k$ by $\Phi^D(k)$ and represent the limit of $c_k$ by $c^*$ as $k\rightarrow \infty$. We have 
  \begin{equation}
    \|\Phi^D(k)\mathbf{x}_0 - c^*\mathbf{1}\|_\infty \le \|\mathbf{y}_k\|_\infty + c^*-c_k.
  \end{equation}
  According to (\ref{eqn:yk-iterates}) and $\mathbf{x}_0=\mathbf{e}_j$, we obtain
  \begin{equation}
    |\Phi^D_{ij}(k) - c^*| \le \prod_{t=0}^k (1-\gamma_t) + \sum_{t=k}^\infty \prod_{s=0}^t (1-\gamma_s),
  \end{equation}
  which completes the proof.
\end{proof}

This lemma shows that the lower bound on $A(k)$ is transitive through products.
\begin{lemma}
  \label{lemma:backward-product-lower-bound}
  Denote the backward product of $\{A(t)\}$ from $s$ to $k$ by $\Phi(s,k) = A(k)\cdots A(s+1)A(s)$ for any $s\le k$. Under Assumption \ref{asp:graph}c), if $\Phi_{ij}(s,k) > 0$, then $\Phi_{ij}(s,k) \ge \prod_{r=s}^k \beta_r$.
\end{lemma}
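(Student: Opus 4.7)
The plan is to prove this by a straightforward induction on the length $k-s$ of the product, using Assumption~\ref{asp:graph}c) in both the base case and the inductive step.

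For the base case $k=s$, observe that $\Phi(s,s)=A(s)$, so $\Phi_{ij}(s,s)=A_{ij}(s)$. If this entry is positive, Assumption~\ref{asp:graph}c) gives $A_{ij}(s)\ge\beta_s=\prod_{r=s}^{s}\beta_r$, which is exactly what we need.

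For the inductive step, I would use the recursion $\Phi(s,k)=A(k)\Phi(s,k-1)$, so that
\[
  \Phi_{ij}(s,k)=\sum_{m=1}^{n} A_{im}(k)\,\Phi_{mj}(s,k-1).
\]
Since every term is non-negative, $\Phi_{ij}(s,k)>0$ forces the existence of some index $m$ for which both $A_{im}(k)>0$ and $\Phi_{mj}(s,k-1)>0$. Assumption~\ref{asp:graph}c) then yields $A_{im}(k)\ge\beta_k$, while the induction hypothesis gives $\Phi_{mj}(s,k-1)\ge\prod_{r=s}^{k-1}\beta_r$. Dropping every other non-negative summand, we obtain
\[
  \Phi_{ij}(s,k)\ge A_{im}(k)\,\Phi_{mj}(s,k-1)\ge\beta_k\prod_{r=s}^{k-1}\beta_r=\prod_{r=s}^{k}\beta_r,
\]
closing the induction.

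There is no serious obstacle here; the lemma is essentially a bookkeeping statement that a positive entry of the backward product certifies the existence of a path in the time-varying digraph whose edges are present at successive times, and each such edge has weight at least $\beta_r$ by the third part of Assumption~\ref{asp:graph}. The only mild subtlety is to avoid trying to lower bound the full sum $\sum_m A_{im}(k)\Phi_{mj}(s,k-1)$, which could include terms where one factor is zero; retaining just the single positive path is what makes the bound clean and tight.
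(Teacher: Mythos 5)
Your proof is correct and is essentially the same as the paper's: an induction on the product length in which a positive entry $\Phi_{ij}(s,k)>0$ forces an intermediate index with $A_{im}(k)>0$ and $\Phi_{mj}(s,k-1)>0$, after which Assumption~\ref{asp:graph}c) and the induction hypothesis give the bound $\prod_{r=s}^{k}\beta_r$. No gaps; your remark about keeping only the single positive summand matches the paper's argument.
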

\begin{proof}
  We prove by induction. For any $s$, we have $A_{ij}(s) \ge \beta_s$ if $A_{ij}(s) > 0$ by Assumption \ref{asp:graph}c). Next we assume the statement holds for any $k \ge s$. If $\Phi_{ij}(s,k+1) >0$, there exists some $l$ such that $A_{il}(k+1) > 0$ and $\Phi_{lj}(s,k) > 0$. Then by Assumption \ref{asp:graph}c), we have
  \begin{equation*}
    \Phi_{ij}(s,k+1) \ge A_{il}(k+1)\Phi_{lj}(s,k) \ge \prod_{r=s}^{k+1} \beta_r,
  \end{equation*}
  which completes the proof.
\end{proof}

Based on the above lemmas, we conclude that the stochastic matrix sequence $\{A(t)\}$ is strongly ergodic.
\begin{theorem}
  Under Assumption \ref{asp:graph}, the stochastic matrix sequence $\{A(t)\}$ is strongly ergodic.
\end{theorem}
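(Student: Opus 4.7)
The plan is to reduce the claim to Lemma~\ref{lemma:positive-column} via the Sarymsakov $\to$ scrambling $\to$ positive-column pipeline the paper has already foregrounded. By Lemma~\ref{lemma:Sarymsakov-Matrix} together with Assumptions~\ref{asp:graph}a) and b), every $A(t)$ is a Sarymsakov matrix with positive diagonal. I would then invoke the classical chain of implications: a product of $n-1$ Sarymsakov matrices with positive diagonals is scrambling, and a further product of $\lceil \log_2 n \rceil$ scrambling matrices with positive diagonals has a strictly positive column. Hence any block of $B=(n-1)\lceil\log_2 n\rceil$ consecutive $A(t)$'s has a positive column---which is precisely why the bound $B$ appears in Assumption~\ref{asp:graph}c).

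Next, I would fix $s\in\mathbb{N}$ and partition the tail into blocks of length $B$, setting
\[
  D_s(m) \;=\; A\bigl(s+(m+1)B-1\bigr)\cdots A(s+mB), \qquad m=0,1,2,\ldots
\]
Each $D_s(m)$ is stochastic and possesses a positive column $j^*_m$ by the previous step. Combining Lemma~\ref{lemma:backward-product-lower-bound} with Assumption~\ref{asp:graph}c) yields the uniform bound
\[
  [D_s(m)]_{i\,j^*_m} \;\ge\; \prod_{t=s+mB}^{s+(m+1)B-1}\beta_t \;\ge\; \frac{\delta}{(m+c_s)^{\lambda}},
\]
for a shift $c_s$ depending only on $s$. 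Setting $\gamma_m = \delta/(m+c_s)^\lambda$, the hypothesis $0<\lambda<1$ guarantees non-summability and $m\gamma_m\to\infty$, so Lemma~\ref{lemma:positive-column} applies to $\{D_s(m)\}_{m\ge 0}$ and gives $\lim_{M\to\infty} D_s(M)\cdots D_s(0)=\mathbf{1}\pi(s)^\top$ for some probability vector $\pi(s)$.

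To transfer block-level convergence to the full backward product $\Phi(s,k)=A(k)\cdots A(s)$, write $k+1-s = MB+r$ with $0\le r<B$ and decompose $\Phi(s,k) = R_{M,k}\,D_s(M-1)\cdots D_s(0)$, where $R_{M,k}=A(k)\cdots A(s+MB)$ is a product of at most $B-1$ stochastic matrices. Since $R_{M,k}\mathbf{1}=\mathbf{1}$ and $\|R_{M,k}\|_\infty=1$, we get
\[
  \bigl\|\Phi(s,k) - \mathbf{1}\pi(s)^\top\bigr\|_\infty \;\le\; \bigl\|D_s(M-1)\cdots D_s(0) - \mathbf{1}\pi(s)^\top\bigr\|_\infty \longrightarrow 0
\]
as $k\to\infty$, establishing strong ergodicity for every $s$.

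The hard part will be the opening reduction: rigorously proving that $B$ consecutive Sarymsakov multiplications with positive diagonals force a positive-column matrix. The combinatorial heart is tracking the growth of the consequent set $F_A(S)$---first showing that $n-1$ Sarymsakov multiplications force $|F|=n$ (the scrambling threshold), and then that $\lceil\log_2 n\rceil$ additional scrambling multiplications with positive diagonals upgrade a two-row common positive column into a column positive in every row via a standard doubling argument. If the paper treats this chain as classical, a citation will suffice; otherwise a dedicated combinatorial lemma is required to justify the specific value of $B$.
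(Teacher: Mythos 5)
Your proposal follows essentially the same route as the paper: Lemma~\ref{lemma:Sarymsakov-Matrix} plus the cited classical facts (a product of $n-1$ Sarymsakov matrices is scrambling, and a product of $\lceil\log_2 n\rceil$ scrambling matrices has a positive column) to show every length-$B$ block has a positive column, then Lemma~\ref{lemma:backward-product-lower-bound} and Assumption~\ref{asp:graph}c) to verify the non-summability and $k\gamma_k\to\infty$ hypotheses of Lemma~\ref{lemma:positive-column} applied to the block products $D_s(\cdot)$. Your explicit handling of the incomplete final block via the stochastic remainder factor $R_{M,k}$ is a small extra piece of care that the paper leaves implicit, but it does not change the argument.
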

\begin{proof}
  We first show that any inhomogeneous product of $B=(n-1)\lceil \log_2 n\rceil$ matrices in $\{A(t)\}$ has a positive column. This argument is established based on the following three facts: 1) by Lemma \ref{lemma:Sarymsakov-Matrix}, each $A(t)$ is a Sarymsakov matrix; 2) a product of $n-1$ Sarymsakov matrices is scrambling \cite[Section 4]{1979CoefErgodicity}; and 3) a product of $\lceil \log_2 n \rceil$ scrambling matrices has a positive column \cite[Proposition 2.1]{2018ConvergentProdStochasticMatrices}. 

  We continue to consider the limiting behavior of the backward product $\Phi(s+(k-1)B, s+kB-1)$ for any $s$. Denote $\Phi(s+(k-1)B, s+kB-1)$ by $D_s(k)$. We obtain a sequence of stochastic matrices of which each has a positive column. By Lemma \ref{lemma:backward-product-lower-bound}, the elements in the positive column are all larger than or equal to $\gamma_k^s = \prod_{r=s+(k-1)B}^{s+kB-1} \beta_r$. We proceed to show that $\{\gamma_k^s\}$ is non-summable with respect to $k$ and $\lim_{k\rightarrow \infty} k\gamma_k^s = \infty$ for all $s$. Represent $s$ by $s=mB+r, 0\le r<B$. Then based on Assumption \ref{asp:graph}c), we have 
  \begin{subequations}
    \begin{equation*}
      \sum_{k=1}^\infty \gamma_k^s \ge \delta \sum_{k=1}^\infty \frac{1}{(k+m+2)^\lambda} = \infty,
    \end{equation*}
    \begin{equation*}
      \lim_{k\rightarrow \infty} k\gamma_k^s \ge \lim_{k\rightarrow \infty} \frac{\delta k}{(k+m+2)^\lambda} = \infty.
    \end{equation*}
  \end{subequations}
  By Lemma \ref{lemma:positive-column}, $\lim_{k\rightarrow\infty} D_s(k)\cdots D_s(2)D_s(1)$ exists, i.e.,
  \begin{equation*}
    \lim_{k\rightarrow \infty} A(k)\cdots A(s+1)A(s) \text{ exists for all }s, 
  \end{equation*}
  which is the definition of strong ergodicity.
\end{proof}

\begin{definition}[Absolute Probability Sequence \cite{1936OnTheoryofMarkovChains,2017ConvRateWeighted-AveragingDynamics}]
  Let $\{A(t)\}$ be a sequence of stochastic matrices. A sequence of stochastic row vectors $\{\pi(t)\}$ is an absolute probability sequence for $\{A(t)\}$ if
  \begin{equation}
    \pi(t) = \pi(t+1)A(t), \quad \forall t.
  \end{equation}
\end{definition}

It has been shown by Kolmogorov \cite{1936OnTheoryofMarkovChains} that every strongly ergodic sequence $\{A(t)\}$ has an absolute probability sequence $\{\pi(t)\}$ with non-negative elements, i.e.,
\begin{equation}
  \lim_{k\rightarrow \infty} A(k)\cdots A(t+1)A(t) = \mathbf{1}\pi(t), \quad \forall t.
\end{equation}
Based on the absolute probability sequence, it is natural to infer through Lemma \ref{lemma:positive-column} that the following proposition holds:
\begin{proposition}
  \label{prop:convergence-rates-backward-product}
  Denote the backward product of $\{A(t)\}$ from $s$ to $k$ by $\Phi(s,k) = A(k)\cdots A(s+1)A(s)$ for any $s\le k$. Under Assumption \ref{asp:graph}, the convergence rate is given by
  \begin{equation}
    \left| \Phi_{ij}(s,k) - \pi_j(s) \right| \le \Gamma(s,k), \quad \forall i,j,
  \end{equation}
  where $\Gamma(s,k)$ is expressed as
  \begin{equation}
    \label{eqn:Gamma-definition}
    \Gamma(s,k) = \prod_{t=0}^{\lfloor \frac{k-s}{B} \rfloor} (1-\gamma_t^s) + \sum_{t=\lfloor \frac{k-s}{B} \rfloor}^\infty \prod_{r=0}^t 
    (1-\gamma_r^s),
  \end{equation}
  and $\gamma_t^s$ is written as
  \begin{equation}
    \gamma_t^s = \frac{\delta}{(\lfloor s/B \rfloor + t + 1)^\lambda}.
  \end{equation}
  Moreover, we define $\Gamma(l,k)=\Gamma(k,k)$ if $l>k$.
\end{proposition}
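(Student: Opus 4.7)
The plan is to apply Lemma~\ref{lemma:positive-column} to the block sequence already constructed in the proof of the preceding strong-ergodicity theorem, and then transfer the resulting rate to the (possibly non-block-aligned) backward product. Fix $s$ and reuse $D_s(k) = \Phi(s+(k-1)B,\,s+kB-1)$, so that for every $K\ge 1$
\begin{equation*}
  \Phi(s,\,s+KB-1) = D_s(K)\cdots D_s(2)D_s(1).
\end{equation*}
The theorem's proof already shows that every $D_s(k)$ is stochastic and possesses a positive column whose entries are bounded below by $\prod_{r=s+(k-1)B}^{s+kB-1}\beta_r$ (via Lemma~\ref{lemma:backward-product-lower-bound}), and that this quantity is in turn bounded below by $\gamma_k^s = \delta/(\lfloor s/B\rfloor + k + 1)^\lambda$, where the resulting sequence is non-summable and satisfies $k\gamma_k^s\to\infty$ because $0<\lambda<1$. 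Hence $\{D_s(k)\}$ meets the hypotheses of Lemma~\ref{lemma:positive-column}.

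Invoking that lemma delivers both the existence of $\lim_{K\to\infty}\Phi(s,\,s+KB-1)$ and the quantitative bound in the same argument; identifying the limit rank-one factor with $\pi(s)$ follows from the Kolmogorov absolute-probability-sequence theorem recalled just above the proposition. Substituting $\gamma_t^s$ into the rate expression furnished by Lemma~\ref{lemma:positive-column} yields
\begin{equation*}
  |\Phi_{ij}(s,\,s+KB-1)-\pi_j(s)| \le \prod_{t=0}^{K}(1-\gamma_t^s) + \sum_{t=K}^{\infty}\prod_{r=0}^{t}(1-\gamma_r^s),
\end{equation*}
at every block-boundary index. For a general $k\ge s$, set $K=\lfloor(k-s)/B\rfloor$ and write $\Phi(s,k)=A(k)\cdots A(s+KB)\cdot\Phi(s,\,s+KB-1)$. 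Using (i) $A(t)\mathbf{1}=\mathbf{1}$ for each $t$ and (ii) the fact that left-multiplication by a row-stochastic matrix is a row-wise $\ell_\infty$ contraction about any fixed row vector, the bound at the preceding block boundary persists under the extra factors. This matches $\Gamma(s,k)$ as defined, and the convention $\Gamma(l,k)=\Gamma(k,k)$ for $l>k$ covers the degenerate edge.

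The main obstacle is the bookkeeping that converts Assumption~\ref{asp:graph}c), which is phrased relative to the canonical blocks $[(s_a-1)B,\,s_aB)$ anchored at $0$, into the uniform lower bound $\gamma_k^s$ over the $k$-th block anchored at $s$. Since an $s$-anchored block of length $B$ straddles at most two canonical blocks, and the deeper of the two is indexed by roughly $\lfloor s/B\rfloor+k+1$, taking the weaker (later) canonical factor gives exactly the stated $\gamma_k^s$. All the remaining pieces — the convergence of $\Phi(s,\,s+KB-1)$, the identification of its limit with $\mathbf{1}\pi(s)$, the rate formula, and the $\ell_\infty$ contraction that patches non-aligned $k$ — assemble directly from results already in hand.
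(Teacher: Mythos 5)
Your proposal is correct and follows essentially the same route the paper intends: the paper offers no detailed proof, asserting only that the proposition is ``natural to infer'' from Lemma~\ref{lemma:positive-column} applied to the blocked products $D_s(k)$ with the limit identified as $\mathbf{1}\pi(s)$ via Kolmogorov, which is exactly your argument. Your explicit patching of non-block-aligned indices by left-multiplying with the remaining row-stochastic factors (which preserves the entrywise deviation bound from the fixed row vector $\pi(s)$) and your block-straddling bookkeeping for $\gamma_t^s$ supply details the paper leaves implicit, sharing only the same harmless off-by-one indexing already present in the paper's own statements.
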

Next we present a lemma on a useful property of $\Gamma(s,k)$. The following lemma reveals the fact that $\Gamma(s,k)$ shares some similarities in terms of the limiting behavior with an exponential function of the form $\beta^{s-k}$, where $\beta$ is a constant. 

\begin{lemma}
  \label{lemma:conv-Gamma}
  Let $\Gamma(s,k)$ be given by (\ref{eqn:Gamma-definition}), then
  \begin{enumerate}
    \item $\forall \mu > 0, s$, $\lim_{k\rightarrow \infty} k^\mu \Gamma(s,k) = 0$;
    \item Letting $\{\alpha_t\} \in \mathbb{R}$ be a positive sequence and $\alpha_t \rightarrow 0$ as $t\rightarrow \infty$, we have $\lim_{k\rightarrow \infty} \sum_{t=1}^k \alpha_t \Gamma(t,k) = 0$;
    \item Let $\{\alpha_t\} \in \mathbb{R}$ be a positive and square summable sequence. We have $\sum_{k=1}^\infty \alpha_k \sum_{t=1}^k \alpha_t \Gamma(t,k) < \infty$.
  \end{enumerate}
\end{lemma}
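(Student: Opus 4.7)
The approach treats the three claims in turn, with the per-$s$ polynomial decay from Lemma \ref{lemma:conv-product-non-summable-sequence} driving Part 1 and uniform estimates on partial sums of $\Gamma(t,k)$ driving Parts 2 and 3.

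For Part 1, I would fix $s$ and set $m(k) = \lfloor (k-s)/B\rfloor$, so that $k \le B(m(k)+1)+s$ and hence $k^\mu \le C_s(m(k)+1)^\mu$ for each $\mu>0$. Since $\lambda<1$, the sequence $\{\gamma_t^s\}_{t\ge 0}$ is positive, non-summable in $t$, and satisfies $t\gamma_t^s\to\infty$. Part 3 of Lemma \ref{lemma:conv-product-non-summable-sequence} applied with $x_t = \gamma_t^s$ then yields
\begin{equation*}
  \lim_{m\to\infty} m^\mu \sum_{t=m}^\infty \prod_{r=0}^t(1-\gamma_r^s) = 0.
\end{equation*}
The single-product summand in $\Gamma(s,k)$ is dominated by the first term of this tail, so $k^\mu\Gamma(s,k)\to 0$.

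For Part 2, the plan is a standard $\varepsilon$-splitting. Given $\varepsilon>0$, pick $N$ so that $\alpha_t<\varepsilon$ for all $t>N$ and write
\begin{equation*}
  \sum_{t=1}^k \alpha_t\Gamma(t,k) = \sum_{t=1}^N \alpha_t \Gamma(t,k) + \sum_{t=N+1}^k \alpha_t\Gamma(t,k).
\end{equation*}
The finite prefix vanishes as $k\to\infty$ by Part 1, and the second piece is at most $\varepsilon\sum_{t=N+1}^k \Gamma(t,k)$, so the problem reduces to establishing a uniform-in-$k$ bound on $\sum_{t=1}^k \Gamma(t,k)$, after which $\varepsilon\downarrow 0$ finishes. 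This uniform estimate is to be obtained by unpacking the polynomial bound $\prod_{t=0}^m(1-\gamma_t^s) \le M^{\varepsilon_0} e^{2\varepsilon_0 F}(m+1)^{-\varepsilon_0}$ that is distilled in the proof of Lemma \ref{lemma:conv-product-non-summable-sequence} with $\varepsilon_0$ chosen sufficiently large, and then summing the resulting estimates over $s$ so that the combination with the second piece of $\Gamma(s,k)$ is controlled via Corollary \ref{col:xtk}.

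For Part 3, I would symmetrize the double sum via the AM-GM inequality $\alpha_k\alpha_t\le(\alpha_k^2+\alpha_t^2)/2$, giving
\begin{equation*}
  \sum_{k=1}^\infty \alpha_k\sum_{t=1}^k\alpha_t\Gamma(t,k) \le \tfrac{1}{2}\sum_{k=1}^\infty \alpha_k^2\sum_{t=1}^k\Gamma(t,k) + \tfrac{1}{2}\sum_{t=1}^\infty \alpha_t^2\sum_{k=t}^\infty\Gamma(t,k).
\end{equation*}
Combining the square summability of $\{\alpha_t\}$ with uniform bounds on both the row sum $\sum_{t=1}^k\Gamma(t,k)$ and the column sum $\sum_{k=t}^\infty\Gamma(t,k)$ (obtained by the same mechanism as in Part 2) then yields finiteness. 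Corollary \ref{col:xtk} applied with $y_k = \alpha_k^2$ provides a direct handle on the column-sum term. The hard part, I expect, is the uniform estimate on the partial sums of $\Gamma(t,k)$: the decay of $\Gamma(t,k)$ in $k-t$ is not a pure geometric rate but depends jointly on both indices through $\gamma_t^s = \delta/(\lfloor s/B\rfloor + t + 1)^\lambda$, so the contributions from $t$ close to $k$ must be absorbed by the smallness of $\alpha_t$ or $\alpha_t^2$, and tracking the polynomial bound embedded in the proof of Lemma \ref{lemma:conv-product-non-summable-sequence} with enough care is the key technical step.
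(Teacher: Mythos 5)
Your Part 1 and the skeleton of Parts 2 and 3 coincide with the paper's proof: the $\varepsilon$-splitting, the reduction to a $k$-uniform bound on $\sum_{t=1}^k\Gamma(t,k)$, the symmetrization $\alpha_k\alpha_t\le(\alpha_k^2+\alpha_t^2)/2$, and the use of Corollary \ref{col:xtk} for the column sums are exactly the steps taken there. The genuine gap is the step you yourself defer: you never establish the uniform bound on $\sum_{t=1}^k\Gamma(t,k)$, and the mechanism you sketch for it cannot deliver it. The polynomial estimate distilled from the proof of Lemma \ref{lemma:conv-product-non-summable-sequence} carries a threshold $M$ that depends on the sequence; for $x_r=\gamma_r^t=\delta/(\lfloor t/B\rfloor+r+1)^\lambda$ the requirement $r\gamma_r^t\ge\varepsilon_0$ forces $M(t)$ to grow like $\varepsilon_0(\lfloor t/B\rfloor)^\lambda/\delta$, so the prefactor $M^{\varepsilon_0}e^{2\varepsilon_0F}$ grows like $t^{\lambda\varepsilon_0}$ and is worst precisely in the dominant regime $t$ close to $k$, where $m=\lfloor(k-t)/B\rfloor$ is small. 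In fact no $k$-uniform bound exists: since $\gamma_r^k\le\delta/(\lfloor k/B\rfloor+1)^\lambda$ for all $r$, the first roughly $(\lfloor k/B\rfloor+1)^\lambda/(2\delta)$ partial products $\prod_{r=0}^{s}(1-\gamma_r^k)$ all exceed $1/2$, whence $\Gamma(k,k)\ge c\,k^\lambda$ for large $k$ and $\sum_{t=1}^k\Gamma(t,k)$ diverges at least at that rate; no choice of $\varepsilon_0$ repairs this.

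To be fair, this is also the fragile point of the paper's own argument: display (\ref{eqn:conv-sum-Gamma}) asserts the same uniform bound by invoking Lemma \ref{lemma:conv-product-non-summable-sequence}, even though the producted sequence there changes with the outer index, so the lemma does not apply verbatim; you have therefore reproduced the paper's strategy together with its weakest step, but as a blind proof your proposal neither closes the gap it flags nor proposes a route that could close it. Note moreover that with $\alpha_t=1/\ln(t+1)$ one already has $\sum_{t=1}^k\alpha_t\Gamma(t,k)\ge\alpha_k\Gamma(k,k)\to\infty$, so no weight-free bound on the unweighted partial sums can ever suffice for Part 2: a workable argument must keep $\alpha_t$ (respectively $\alpha_t^2$) inside the tail sums and trade its quantitative decay against the $k^\lambda$-type prefactor in $\Gamma(t,k)$ --- consistent with the restriction $\alpha_k\in\mathcal{O}(1/k)$ that the paper later imposes in Section \ref{sec:app} --- rather than reduce to a uniform estimate on $\sum_{t\le k}\Gamma(t,k)$.
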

\begin{proof}
  Part 1) is a direct result of Lemma \ref{lemma:conv-product-non-summable-sequence}. We focus on the proof of Part 2) and Part 3). 

  \textbf{Proof of Part 2)}: Since $\lim_{k\rightarrow\infty} \alpha_k=0$, we have $\forall \varepsilon > 0$, and there exists some $M$ such that $\alpha_k \le \varepsilon$ for any $k\ge M$. Then $\forall k > M$,
  \begin{equation*}
    \sum_{t=1}^k \alpha_t \Gamma(t,k) \le \sum_{t=1}^M \alpha_t \Gamma(t,k) + \varepsilon \sum_{t=M+1}^k \Gamma(t,k) .
  \end{equation*}
  Since $\lim_{k\rightarrow \infty} \Gamma(t,k) = 0$ for all $t$, we obtain
  \begin{equation*}
    \limsup_{k\rightarrow\infty} \sum_{t=1}^M \alpha_t \Gamma(t,k) \le M \sup_{1\le t\le M} \alpha_t \limsup_{k\rightarrow\infty} \Gamma(t,k) = 0.
  \end{equation*}
  To derive the second part, it is sufficient to verify that $\sum_{t=M+1}^k \sum_{s=\lfloor \frac{k-t}{B} \rfloor}^\infty \prod_{r=0}^s (1-\gamma_r^t)$ is uniformly bounded $\forall k$. To establish this, we have
  \begin{equation}
    \label{eqn:conv-sum-Gamma}
    \begin{aligned}
      \sum_{t=M+1}^k \sum_{s=\lfloor \frac{k-t}{B} \rfloor}^\infty \gamma_{(s)}^t &\le \sum_{m=0}^{\lfloor \frac{k-1-M}{B} \rfloor} \sum_{t=k-1-(m+1)B}^{k-1-mB} \sum_{s=m}^\infty \gamma_{(s)}^t \\
      &\le B\limsup_{k\rightarrow \infty} \sum_{m=0}^k \sum_{s=m}^\infty \gamma_{(s)}^{k-1-(m+1)B} \\
      &< \infty,
    \end{aligned}
  \end{equation}
  where $\gamma_{(s)}^t$ is short for $\prod_{r=0}^s (1-\gamma_r^t)$, and the last inequality holds by Lemma \ref{lemma:conv-product-non-summable-sequence}. Since $\varepsilon$ is selected arbitrarily, we have
  \begin{equation*}
    \limsup_{k\rightarrow \infty} \sum_{t=1}^k \alpha_t \Gamma(t,k) = 0 \Longrightarrow \lim_{k\rightarrow \infty} \sum_{t=1}^k \alpha_t \Gamma(t,k) = 0,
  \end{equation*}
  which completes the proof of Part 2).

  \textbf{Proof of Part 3)}: Based on the Cauchy-Schwarz inequality, we obtain
  \begin{equation*}
    \sum_{k=1}^\infty \alpha_k \sum_{t=1}^k \alpha_t \Gamma(t,k) \le \sum_{k=1}^\infty \alpha_k^2 \sum_{t=1}^k \Gamma(t,k) + \sum_{k=1}^\infty \sum_{t=1}^k \alpha_t^2 \Gamma(t,k).
  \end{equation*}
  According to (\ref{eqn:conv-sum-Gamma}), we conclude that $\sum_{t=1}^k \Gamma(t,k)$ is uniformly bounded $\forall k$. Since $\{\alpha_k\}$ is square summable, we have
  \begin{equation*}
    \sum_{k=1}^\infty \alpha_k^2 \sum_{t=1}^k \Gamma(t,k) < \infty.
  \end{equation*}
  In addition, we have by Corollary \ref{col:xtk}
  \begin{equation}
    \sum_{k=1}^\infty \sum_{t=1}^k \alpha_t^2 \Gamma(t,k) = \sum_{t=1}^\infty \alpha_t^2 \sum_{k=t}^\infty \Gamma(t,k) < \infty,
  \end{equation}
  which completes the proof. 
\end{proof}

\vspace{-6pt}
\subsection{Limit of the Absolute Probability Sequence}
In general, there is no more explicit conclusion on the evolution of the absolute probability sequence $\{\pi(k)\}_{k\in \mathbb{N}^+}$ without additional assumptions. In this work, we provide insight into the asymptotic behavior of $\pi(k)$ when the stochastic matrices $\{A(t)\}$ tend to an identity matrix as stated in the following assumption:
% Furthermore, to control the potential impacts induced by the factor $1/A_{ii}(k)$, a restriction on the asymptotic behavior of $A(k)$ is necessary.
\begin{assumption}
  \label{asp:conv-time-varying-matrix}
  \begin{enumerate}
    \item The stochastic matrix sequence tends to the identity matrix with rate $\mathcal{O}(1/\ln k)$, i.e.,
    \begin{equation}
      \sum_{k=1}^\infty \frac{1}{k} \|I-A(k)\| < \infty.
    \end{equation}
    Therefore, $\left(\sum_{i=1}^n 1/A_{ii}(k)\right)^2$ is uniformly bounded by some constant $\chi^2$ for all $k$.
    \item There exists a natural number $N>1$ and a positive real sequence $\{\omega_k\}_{k\in \mathbb{N}^+}$ satisfying $\sum_{k=1}^\infty \omega_k \ln k < \infty$ such that
    \begin{equation}
      1 - \Phi_{ii}(k,k+N-1) \le \omega_k
    \end{equation}
    holds $\forall k$. Therefore, it follows that
    \begin{equation}
      \Phi_{ij}(k,k+N-1) \le \omega_k, \quad \forall k, i\neq j.
    \end{equation}
  \end{enumerate}
\end{assumption}

\begin{remark}
  Indeed, it is straightfoward to verify that Assumption \ref{asp:conv-time-varying-matrix} conflicts with any $S(\delta)$ matrix sequence for a fixed $\delta > 0$. In addition, it can be conluded that 
  \begin{equation}
    \label{eqn:lnk-sum-dominate}
    \sum_{n=1}^\infty \sum_{m=n}^\infty \frac{1}{n} \omega_m = \sum_{k=1}^\infty \sum_{n=1}^k \frac{1}{n} \omega_k < \infty.
  \end{equation}
  This follows directly the fact that the growth of the harmonic series is asymptotic to $\mathcal{O}(\ln k)$.
\end{remark} 

Before proceeding to reveal the asymptotic behavior of $\pi(k)$ under Assumption \ref{asp:conv-time-varying-matrix}, we need the following simple lemma:
\begin{lemma}
  \label{lemma:conv-1-xt-summable}
  Letting $\{x_t\}$ be a summable positive real sequence with $0\le x_t <1, \forall t$, we have $\lim_{s\rightarrow \infty} \prod_{k=0}^\infty (1-x_{s+k}) = 1$. If further assuming $\sum_{k=1}^\infty x_k \ln k < \infty$, we have 
  \begin{equation}
    \sum_{n=1}^\infty \frac{1}{n} \left[ 1- \prod_{k=0}^\infty (1-x_{n+k}) \right] < \infty.
  \end{equation}
\end{lemma}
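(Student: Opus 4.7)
My plan attacks the two parts in sequence, using summability of $\{x_t\}$ for the first and a single linearisation step plus a Tonelli rearrangement for the second.

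For part 1, I would work with the logarithm of the tail product. Summability of $\{x_t\}$ forces $x_t\to 0$, so for $s$ large enough every term $x_{s+k}$ is at most $1/2$, and then the elementary inequality $|\ln(1-y)|\le 2y$ valid on $[0,1/2]$ gives
\begin{equation*}
\left|\ln\prod_{k=0}^\infty (1-x_{s+k})\right| = \left|\sum_{k=0}^\infty \ln(1-x_{s+k})\right| \le 2\sum_{t=s}^\infty x_t.
\end{equation*}
The right-hand side is the tail of a convergent series and therefore vanishes as $s\to\infty$, so the product tends to $1$ after exponentiating.

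For part 2, the plan is first to replace the product by a linear expression using the Weierstrass product inequality $\prod_k(1-a_k)\ge 1-\sum_k a_k$, valid for $a_k\in[0,1]$ by a one-line induction. This yields
\begin{equation*}
1-\prod_{k=0}^\infty (1-x_{n+k}) \le \sum_{m=n}^\infty x_m,
\end{equation*}
after which I would multiply by $1/n$, sum over $n$, and swap the two summations by Tonelli (all terms are non-negative). The inner sum collapses to the harmonic partial sum $H_m\le 1+\ln m$, so the whole expression is dominated by
\begin{equation*}
\sum_{m=1}^\infty x_m \sum_{n=1}^m \frac{1}{n} \le \sum_{m=1}^\infty x_m(1+\ln m),
\end{equation*}
which is finite by the combined hypotheses.

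The main obstacle is conceptual rather than technical, and lies in identifying the correct linearisation in part 2: a naive bound such as $1-\prod_k(1-x_{n+k})\le 1$ would be far too lossy to produce a summable series when divided by $n$. The Weierstrass inequality is precisely the tool that converts the product into a tail sum whose swap with $1/n$ generates the harmonic factor $H_m$, and the reinforced hypothesis $\sum_k x_k\ln k<\infty$ is then seen to be exactly the condition designed to absorb this $\ln m$ growth; mere summability of $\{x_t\}$ would not suffice.
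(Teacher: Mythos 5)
Your proof is correct and follows essentially the same route as the paper: both reduce the problem to the tail sums $\sum_{k\ge n} x_k$, and Part 2 rests on exactly the same Tonelli swap $\sum_{n\ge 1} \frac{1}{n}\sum_{k\ge n} x_k = \sum_{k\ge 1} x_k \sum_{n=1}^{k}\frac{1}{n} \le \sum_{k\ge 1} x_k (1+\ln k)$, which is finite by the hypotheses. The only difference is the linearization step: you invoke the Weierstrass inequality $\prod_{k}(1-x_{n+k}) \ge 1-\sum_{k} x_{n+k}$ in one stroke, while the paper passes through $e^{-2x}\le 1-x$ (after discarding the finitely many indices with $x_t>1/2$) and then uses $1-e^{-y}\le y$; your variant is marginally cleaner since it needs no such reduction, but the argument is the same in substance.
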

\begin{proof}
  As $\{x_t\}$ is summable and $1-x_t \neq 0$, we have $\prod_{k=1}^\infty (1-x_k) > 0$. Since $\prod_{k=0}^\infty (1-x_{s+k}) = c_s$ for all $s$ and $1 \ge c_{s+1} > c_s > 0$, $\{c_s\}$ converges as $s \rightarrow \infty$ according to the monotone convergence theorem. Therefore, we have 
  \begin{equation}
    c_s = \frac{c_1}{\prod_{t=1}^s (1-x_t)}.
  \end{equation}
  Taking $s\rightarrow \infty$ on both sides, we obtain $\lim_{s\rightarrow \infty} c_s =1$.
  
  To proceed, we assume $x_t \le \frac{1}{2}$ for all $t$ without loss of generality (there are only finitely many points $x_k > 1/2$). For any $0 \le x \le \frac{1}{2}$, we have $e^{-2x} \le 1-x$. Hence,
  \begin{equation}
    1 - \prod_{k=0}^\infty (1-x_{n+k}) \le 1 - e^{-2\sum_{k=0}^\infty x_{n+k}}.
  \end{equation}
  Since $\lim_{x\rightarrow 0} \frac{1-e^{-x}}{x} = 1$, it is sufficient to consider 
  \begin{equation}
    \sum_{n=1}^\infty \sum_{k=0}^\infty \frac{1}{n} x_{n+k} = \sum_{n=1}^\infty \sum_{k=n}^\infty \frac{1}{n} x_k < \infty,
  \end{equation}
  which holds by a derivation similar to that for (\ref{eqn:lnk-sum-dominate}). 
\end{proof}

\begin{theorem}
  \label{theorem:conv-absolute-prob-seq}
  Under Assumptions \ref{asp:graph} and \ref{asp:conv-time-varying-matrix}, the absolute probability sequence $\{\pi_k\}$ tends to the uniform distribution $\frac{1}{n} \mathbf{1}^T$ and the convergence rate satisfies
  \begin{equation}
    \sum_{k=1}^\infty \frac{1}{k} \left|\pi_i(k) - \frac{1}{n}\right| < \infty, \quad i=1,2,\dots,n.
  \end{equation}
\end{theorem}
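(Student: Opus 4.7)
The plan is to exploit the $N$-step identity $\pi(k) = \pi(k+N)\Phi(k, k+N-1)$ together with the approximate double-stochasticity of $\Phi(k, k+N-1)$ implied by Assumption \ref{asp:conv-time-varying-matrix}b, deriving an explicit series formula for $\tilde\pi(k) := \pi(k) - \frac{1}{n}\mathbf{1}^T$ whose weighted sum $\sum_k |\tilde\pi_i(k)|/k$ can be controlled via Proposition \ref{prop:convergence-rates-backward-product}.

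First, substituting $\pi(\cdot) = \frac{1}{n}\mathbf{1}^T + \tilde\pi(\cdot)$ into the $N$-step identity produces the recurrence $\tilde\pi(k) = g_k + \tilde\pi(k+N)\Phi(k, k+N-1)$ with $g_k := \frac{1}{n}(\mathbf{1}^T\Phi(k, k+N-1) - \mathbf{1}^T)$ satisfying $g_k\mathbf{1}=0$. Assumption \ref{asp:conv-time-varying-matrix}b together with row-stochasticity yields $\Phi_{ij}(k,k+N-1)\le\omega_k$ for $i\neq j$, so every column sum of $\Phi(k,k+N-1)$ deviates from $1$ by at most $(n-1)\omega_k$ and $\|g_k\|_1\le(n-1)\omega_k$. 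The recurrence then shows $\|\tilde\pi(k)-\tilde\pi(k+N)\|_\infty = O(\omega_k)$; combined with $\|\pi(k+1)-\pi(k)\|_\infty\to 0$ (from $A(k)\to I$ under Assumption \ref{asp:conv-time-varying-matrix}a), the Cauchy criterion yields $\tilde\pi(k)\to\tilde\pi^*$ with $\tilde\pi^*\mathbf{1}=0$. Iterating the recurrence $M$ times and invoking strong ergodicity $\Phi(k, k+MN-1)\to\mathbf{1}\pi(k)$ together with $\tilde\pi^*\mathbf{1}=0$ to annihilate the tail $\tilde\pi(k+MN)\Phi(k, k+MN-1)\to(\tilde\pi^*\mathbf{1})\pi(k)=0$, I obtain
\begin{equation*}
\tilde\pi(k) = \sum_{m=0}^\infty g_{k+mN}\,\Phi(k, k+mN-1).
\end{equation*}

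Next, the zero-sum property $\sum_i g_{k+mN,i}=0$ permits rewriting $(g_{k+mN}\Phi(k,k+mN-1))_j = \sum_i g_{k+mN,i}(\Phi_{ij}(k,k+mN-1)-\pi_j(k))$, which by Proposition \ref{prop:convergence-rates-backward-product} is bounded by $\|g_{k+mN}\|_1\cdot\min\{\Gamma(k,k+mN-1),1\}\le n\omega_{k+mN}\min\{\Gamma(k,k+mN-1),1\}$, giving
\begin{equation*}
|\tilde\pi_j(k)| \le n\sum_{m=0}^\infty \omega_{k+mN}\min\{\Gamma(k,k+mN-1),1\}.
\end{equation*}
Substituting $l=k+mN$ and swapping the order of summation,
\begin{equation*}
\sum_{k=1}^\infty \frac{|\tilde\pi_j(k)|}{k} \le n\sum_{l=1}^\infty\omega_l\sum_{\substack{k\le l\\ k\equiv l\,(\mathrm{mod}\,N)}} \frac{\min\{\Gamma(k,l-1),1\}}{k} \le n\sum_{l=1}^\infty \omega_l\,(1+\ln l)<\infty
\end{equation*}
by Assumption \ref{asp:conv-time-varying-matrix}b (which yields both $\sum_l\omega_l<\infty$ and $\sum_l\omega_l\ln l<\infty$).

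Since $\sum_k 1/k=\infty$ while $\tilde\pi(k)\to\tilde\pi^*$, the finiteness of $\sum_k |\tilde\pi_j(k)|/k$ forces $\tilde\pi^*_j = 0$; hence $\pi^* = \frac{1}{n}\mathbf{1}^T$ and the stated convergence rate is exactly the displayed bound. The main obstacle is pinning down $\tilde\pi^*$ itself: the recurrence degenerates to the tautology $\tilde\pi^*=\tilde\pi^*$ in the $k\to\infty$ limit, so one must derive the explicit series representation and combine its weighted summability with the divergence of the harmonic series in order to conclude $\tilde\pi^*=0$.
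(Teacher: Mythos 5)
Your argument is correct, but it follows a genuinely different route from the paper's. The paper works with the inverse recursion $\pi(k_{m+1}^N)=\pi(k_m^N)\Phi^{-1}(k_m^N,k_{m+1}^N-1)$: it shows $\Phi(k,k+N-1)$ is eventually strictly diagonally dominant, bounds $\Phi^{-1}$ via the Banach lemma, invokes the supermartingale convergence theorem to get limits $\theta_i^k$ along arithmetic progressions, uses periodicity plus a monotonicity argument to remove the $k$-dependence, and then proves the rate from the pairwise bound $|\pi_i(k)-\pi_j(k)|\le 2\bigl[1-\prod_{m\ge 0}(1-\omega_{k+mN})\bigr]$ together with Lemma \ref{lemma:conv-1-xt-summable}. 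You instead iterate the forward identity $\pi(k)=\pi(k+N)\Phi(k,k+N-1)$ for the deviation $\tilde\pi(k)=\pi(k)-\frac1n\mathbf{1}^T$, obtaining the explicit series $\tilde\pi(k)=\sum_{m\ge 0}g_{k+mN}\Phi(k,k+mN-1)$ driven by the column-sum defects $g_k$, with the tail annihilated by strong ergodicity; the resulting bound $|\tilde\pi_j(k)|\le n\sum_{m\ge 0}\omega_{k+mN}$ is quantitatively the same order as the paper's $2\bigl[1-\prod_m(1-\omega_{k+mN})\bigr]$, and the final swap of summation against the harmonic weights uses $\sum_l\omega_l\ln l<\infty$ directly. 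Your route avoids matrix inversion, the Banach lemma, the supermartingale theorem, and Lemma \ref{lemma:conv-1-xt-summable}, and in fact the $\min\{\Gamma,1\}\le 1$ bound shows Proposition \ref{prop:convergence-rates-backward-product} is not even needed for the rate (only strong ergodicity for the tail), making the argument more elementary and self-contained; it also yields $\tilde\pi(k)\to 0$ directly from the series, so the closing harmonic-divergence step is a harmless redundancy. One shared caveat: your step $\|\pi(k+1)-\pi(k)\|_\infty\to 0$ reads $A(k)\to I$ out of Assumption \ref{asp:conv-time-varying-matrix}a, which the displayed condition $\sum_k\frac1k\|I-A(k)\|<\infty$ alone does not imply; the paper's own proof makes exactly the same reading (when taking $m\to\infty$ in $\pi_i(k_m^N)\le\pi_i(k_m^N+1)A_{ii}(k_m^N)+\sum_{j\ne i}A_{ji}(k_m^N)$), so this is an imprecision of the assumption's statement rather than a gap in your proof.
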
 
\begin{proof}
  According to Assumption \ref{asp:conv-time-varying-matrix}, it can be directly seen that $\Phi(k,k+N-1)$ is a strictly diagonally dominant stochastic matrix for sufficiently large $k$, and hence invertible. Here we apply a trick to decompose the matrix into
  \begin{equation}
    \Phi(k,k+N-1) = (1-\omega_k) I + \omega_k \phi_k,
  \end{equation}
  where $\phi_k$ is a stochastic matrix. Consider a matrix norm $\|A\|_\infty = \max_{i,j} |A_{ij}|$ such that $\|I\|_\infty = 1$. By the Banach lemma, we obtain
  \begin{equation}
    \begin{aligned}
      \|\Phi^{-1}(k,k+N-1)\|_\infty &= \|((1-\omega_k) I + \omega_k \phi_k)^{-1}\|_\infty \\
      &\le \frac{1-\omega_k}{1 - \omega_k \|\phi_k\|_\infty / (1-\omega_k)} \\
      &\le 1 + \frac{\omega_k^2}{1-2\omega_k},
    \end{aligned}
  \end{equation}
  for sufficiently large $k$. Moreover,
  \begin{equation}
    \begin{aligned}
      &\|\Phi^{-1}(k,k+N-1) - I\|_\infty \\ 
      &\le \|\Phi^{-1}(k,k+N-1)\|_\infty \|\Phi(k,k+N-1) - I\|_\infty \\
      &\le \omega_k \left( 1+ \frac{\omega_k^2}{1-2\omega_k} \right).
    \end{aligned}
  \end{equation}
  Therefore, we conclude that, for sufficiently large $k$,
  \begin{subequations}
    \begin{equation}
      \Phi^{-1}_{ii}(k,k+N-1) \le 1 + \frac{\omega_k^2}{1-2\omega_k}, \forall i, 
    \end{equation}
    \begin{equation}
      \Phi^{-1}_{ij}(k,k+N-1) \le \omega_k \left( 1+ \frac{\omega_k^2}{1-2\omega_k} \right), \forall i\neq j.
    \end{equation}
  \end{subequations}
  For any $k$ and sufficiently large $m \in \mathbb{N}$, we have
  \begin{equation}
    \pi(k_{m+1}^N) = \pi(k_m^N) \Phi^{-1}(k_m^N,k_{m+1}^N-1),
  \end{equation}
  where $k_m^N$ is short for $k+mN$. Hence for any $i$,
  \begin{equation*}
    \pi_i(k_{m+1}^N) \le \pi_i(k_m^N)\left(1+\frac{\omega_k^2}{1-2\omega_k}\right) + n\omega_k \left(1+\frac{\omega_k^2}{1-2\omega_k}\right).
  \end{equation*}
  By the convergence theorem for supermartingales \cite{1971SuperMartingale}, we conclude that $\pi_i(k_m^N)$ converges as $m\to \infty$, i.e.,
  \begin{equation}
    \lim_{m\rightarrow \infty} \pi_i(k_m^N) = \theta_i^k, \quad \forall i,k.
  \end{equation}
  Moreover, we have $\theta_i^k = \theta_i^{k+N}$, which means that the limit is a periodic function with respect to $k$. We observe that
  \begin{equation}
    \begin{aligned}
      \pi_i(k_m^N) &= \sum_{j=1}^n \pi_j(k_m^N+1) A_{ji}(k_m^N) \\
      &\le \pi_i(k_m^N+1) A_{ii}(k_m^N) + \sum_{j\neq i} A_{ji}(k_m^N).
    \end{aligned}
  \end{equation}
  Taking $m\to \infty$ at both sides, we obtain by Assumption \ref{asp:conv-time-varying-matrix}
  \begin{equation}
    \theta_i^k \le \theta_i^{k+1}, \quad \forall k \in \mathbb{N}^+.
  \end{equation}
  Therefore, it follows that for any $k$
  \begin{equation}
    \theta_i^k \le \theta_i^{k+1} \le \cdots \le \theta_i^{k+N},
  \end{equation}
  which indicates that $\theta_i^k = \theta_i$ as $\theta_i^k=\theta_i^{k+N}$. 
  
  Next we show that $\theta_i$ is independent of $i$ by applying Proposition \ref{prop:convergence-rates-backward-product}. We consider the difference between $\pi_i(k)$ and $\pi_j(k)$, $\forall i\neq j$ as follows:
  \begin{equation}
    \begin{aligned}
      |\pi_i(k) - \pi_j(k)| \le |\Phi_{ii}(k,k_m^N) -1| + |1 - \Phi_{jj}(k,k_m^N)| \\ + |\Phi_{ii}(k,k_m^N) - \pi_i(k)| + |\Phi_{jj}(k,k_m^N) - \pi_j(k)|.
    \end{aligned}
  \end{equation}
  Taking $m\rightarrow \infty$ at both sides, we obtain by Lemma \ref{lemma:backward-product-lower-bound}
  \begin{equation}
    \label{eqn:pik-diff}
    |\pi_i(k) - \pi_j(k)| \le 2 \left[ 1- \prod_{m=0}^\infty (1-\omega_{k+mN}) \right].
  \end{equation}
  Taking $k\to \infty$ at both sides, it follows that by Lemma \ref{lemma:conv-1-xt-summable}
  \begin{equation}
    |\theta_i - \theta_j| = 0.
  \end{equation}
  Since $\sum_{i=1}^n \theta_i = 1$, we conclude that $\theta_i = \frac{1}{n}$, $\forall i$. If $\pi(k) \neq \frac{1}{n} \mathbf{1}^T$, then $\forall p$ with $\pi_p(k) > 1/n$ there must be some $q \neq p$ such that $\pi_p(k) > 1/n > \pi_q(k)$. By (\ref{eqn:pik-diff}),
  \begin{equation}
    \begin{aligned}
      \pi_p(k) - \frac{1}{n} &< \pi_p(k) - \frac{1}{n} + \frac{1}{n} - \pi_q(k) \\
      &= \pi_p(k) - \pi_q(k) \\
      &\le 2 \left[ 1- \prod_{m=0}^\infty (1-\omega_{k+mN}) \right].
    \end{aligned}
  \end{equation}
  The same argument still holds for any $p$ with $\pi_p(k)<1/n$. Therefore, it follows $\forall i$ that
  \begin{equation}
    \left|\pi_i(k) - \frac{1}{n}\right| \le 2\left[ 1- \prod_{m=0}^\infty (1-\omega_{k+mN}) \right].
  \end{equation}
  By Lemma \ref{lemma:conv-1-xt-summable}, we conclude that
  \begin{equation}
    \sum_{k=1}^\infty \frac{1}{k} \left| \pi_i(k) - \frac{1}{n} \right| < \infty, \quad i=1,2,\dots,n,
  \end{equation}
  which completes the proof.
\end{proof}
\vspace{-6pt}
\section{Application to Subgradient Methods}
\label{sec:app}
The standard distributed optimization is formulated as finding the solution to the following problem:
\begin{equation}
  \label{eqn:optimization-objective}
  \min_{\mathbf{x} \in \mathcal{X}} f(\mathbf{x}) = \sum_{i=1}^n f_i(\mathbf{x}),
\end{equation}
where each $f_i$ is a private objective function only available to agent $i$, and $\mathcal{X}$ is a non-empty closed convex set bounded by $\eta$, i.e., $\sup_{\mathbf{x},\mathbf{y} \in \mathcal{X}} \|\mathbf{x} - \mathbf{y}\|\le \eta$. We assume that the solution set $\mathcal{X}^*$ of (\ref{eqn:optimization-objective}) is not empty. 

In practical multi-agent networks, it is not always feasible to transform a weighted graph into a doubly-stochastic matrix. Therefore, the problem of unbalanced graphs arises. 

% In pratical multi-agent networks, the raw data vectors are compressed or encoded locally before transmission to reduce transmission overheads and eliminate potential security concerns, which naturally induces data errors.

In this section, we propose the following unbalanced distributed projected subgradient (UDPSG) method to cope with cases of unbalanced weighted graphs where the time-varying matrix sequence $\{A(t)\}$ satisfies Assumptions \ref{asp:graph} and \ref{asp:conv-time-varying-matrix}: 
\begin{equation}
  \label{eqn:unconstrained-iterates}
  \mathbf{x}_{i,k+1} = \mathcal{P}_{\mathcal{X}}\left[ \sum_{j=1}^n A_{ij}(k) \mathbf{x}_{j,k} - \alpha_k \frac{\mathbf{g}_{i,k}}{A_{ii}(k)} \right],
\end{equation}
where $\mathbf{x}_{i,k} \in \mathbb{R}^m$ is the raw data vector of agent $i$ maintained at iteration $k$, $\mathbf{g}_{i,k} \in \partial f_i(\mathbf{x}_{i,k})$ represents the subgradient of $f_i$ evaluated at $\mathbf{x}_{i,k}$, and the positive real sequence $\{\alpha_k\}$ is not summable but square summable. Represent the matrix composed of stacked vectors $\{\mathbf{x}_{i,k}^T\} \in \mathbb{R}^m$ and $\{\mathbf{g}_{i,k}^T / A_{ii}(k)\} \in \mathbb{R}^m$ by $X(k) \in \mathbb{R}^{n\times m}$ and $G(k) \in \mathbb{R}^{n\times m}$, respectively. Denote the projection error by $\xi_{i,k}$ which is written as
\begin{equation}
  \label{eqn:xik-projection-error}
  \xi_{i,k} = \mathbf{x}_{i,k+1} - \mathbf{v}_{i,k}, 
\end{equation}
where $\mathbf{v}_{i,k}$ is expressed by
\begin{equation}
  \label{eqn:vik}
  \mathbf{v}_{i,k} = \sum_{j=1}^n A_{ij}(k)\mathbf{x}_{j,k} - \alpha_k \frac{\mathbf{g}_{i,k}}{A_{ii}(k)}.
\end{equation}
Denoting the projection error matrix by $\Xi(k)$, we obtain
\begin{equation}
  \label{eqn:X-matrix-iterates}
  X(k+1) = A(k)X(k) - \alpha_k G(k) + \Xi(k).
\end{equation}
The following result on projection errors will be helpful:
\begin{lemma}[Lemma 1, \cite{2010ConstrainedConsensus}]
  \label{lemma:projection-errors}
  Let $\mathcal{X}$ be a nonempty closed convex set in $\mathbb{R}^d$. Then we have $\forall \mathbf{x} \in \mathbb{R}^d$ and $\mathbf{y} \in \mathcal{X}$
  \begin{subequations}
    \begin{equation}
      \langle \mathcal{P}_\mathcal{X} [\mathbf{x}] - \mathbf{x}, \mathbf{x} - \mathbf{y} \rangle \le - \|\mathcal{P}_\mathcal{X} [\mathbf{x}] - \mathbf{x}\|^2,
    \end{equation}
    \begin{equation}
      \|\mathcal{P}_\mathcal{X} [\mathbf{x}] - \mathbf{y}\|^2 \le \|\mathbf{x} - \mathbf{y}\|^2 - \|\mathcal{P}_\mathcal{X} [\mathbf{x}] - \mathbf{x}\|^2.
    \end{equation}
  \end{subequations}
\end{lemma}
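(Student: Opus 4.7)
The plan is to derive both inequalities from the first-order variational characterization of the metric projection onto a closed convex set. Setting $\mathbf{p} = \mathcal{P}_\mathcal{X}[\mathbf{x}]$, which is the unique minimizer of $\|\mathbf{z} - \mathbf{x}\|^2$ over $\mathbf{z} \in \mathcal{X}$, convexity of $\mathcal{X}$ together with the first-order optimality condition yields the obtuse-angle inequality $\langle \mathbf{x} - \mathbf{p}, \mathbf{z} - \mathbf{p}\rangle \le 0$ for every $\mathbf{z} \in \mathcal{X}$. This single inequality is essentially the only ingredient needed; both (a) and (b) then follow from algebraic manipulation.

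For part (a), I would specialize the variational inequality to $\mathbf{z} = \mathbf{y} \in \mathcal{X}$ and split $\mathbf{y} - \mathbf{p} = (\mathbf{y} - \mathbf{x}) + (\mathbf{x} - \mathbf{p})$. Expanding the inner product gives $\langle \mathbf{x} - \mathbf{p}, \mathbf{y} - \mathbf{x}\rangle + \|\mathbf{x} - \mathbf{p}\|^2 \le 0$; flipping the sign on both factors of the inner product produces exactly $\langle \mathbf{p} - \mathbf{x}, \mathbf{x} - \mathbf{y}\rangle \le -\|\mathbf{p} - \mathbf{x}\|^2$, which is (a).

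For part (b), I would expand the squared norm by inserting $\pm \mathbf{x}$:
\begin{equation*}
  \|\mathbf{p} - \mathbf{y}\|^2 = \|\mathbf{p} - \mathbf{x}\|^2 + 2\langle \mathbf{p} - \mathbf{x}, \mathbf{x} - \mathbf{y}\rangle + \|\mathbf{x} - \mathbf{y}\|^2,
\end{equation*}
and then substitute the bound $2\langle \mathbf{p} - \mathbf{x}, \mathbf{x} - \mathbf{y}\rangle \le -2\|\mathbf{p} - \mathbf{x}\|^2$ that was just established in part (a). Collapsing the two $\|\mathbf{p} - \mathbf{x}\|^2$ terms with net coefficient $-1$ delivers the stated inequality.

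There is no real obstacle; the only step that deserves care is justifying the variational inequality itself. To do so, I would take any $\mathbf{z} \in \mathcal{X}$, form the convex combination $\mathbf{p}_t = (1-t)\mathbf{p} + t\mathbf{z} \in \mathcal{X}$ for $t \in [0,1]$, note that $t \mapsto \|\mathbf{p}_t - \mathbf{x}\|^2$ attains its minimum at $t = 0$, and compute the right derivative at $t = 0^+$ to obtain $2\langle \mathbf{p} - \mathbf{x}, \mathbf{z} - \mathbf{p}\rangle \ge 0$. Once this is in hand, parts (a) and (b) are one-line manipulations as described.
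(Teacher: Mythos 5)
Your proof is correct. The paper itself offers no proof of this lemma---it is imported verbatim as Lemma 1 of the cited reference \cite{2010ConstrainedConsensus}---and your argument is the standard one used there: the variational (obtuse-angle) inequality $\langle \mathbf{x} - \mathcal{P}_\mathcal{X}[\mathbf{x}], \mathbf{z} - \mathcal{P}_\mathcal{X}[\mathbf{x}]\rangle \le 0$ for $\mathbf{z}\in\mathcal{X}$, established via the one-sided derivative along the segment toward $\mathbf{z}$, followed by the decomposition $\mathbf{y}-\mathcal{P}_\mathcal{X}[\mathbf{x}] = (\mathbf{y}-\mathbf{x})+(\mathbf{x}-\mathcal{P}_\mathcal{X}[\mathbf{x}])$ for part (a) and the expansion of $\|\mathcal{P}_\mathcal{X}[\mathbf{x}]-\mathbf{y}\|^2$ about $\mathbf{x}$ combined with (a) for part (b). Both algebraic steps check out, so there is nothing to correct.
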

Based on this lemma, we obtain
\begin{equation}
  \label{eqn:estimation-projection-error}
  \begin{aligned}
    \mathbb{E}\|\Xi(k)\|^2 &= \sum_{i=1}^n \mathbb{E}\|\xi_{i,k}\|^2 \\ 
    &\le \sum_{i=1}^n \mathbb{E} \left\|\alpha_k \frac{\mathbf{g}_{i,k}}{A_{ii}(k)} \right\|^2 \le \sum_{i=1}^n \alpha_k^2 \frac{\mathbb{E}\|\mathbf{g}_{i,k}\|^2}{(A_{ii}(k))^2}.
  \end{aligned} 
\end{equation}
Unrolling the iterates (\ref{eqn:X-matrix-iterates}), we obtain $\forall s\le k$,
\begin{equation*}
  X(k+1) = \Phi(s,k) X_s - \sum_{t=s}^k  \Phi(t+1,k) (\alpha_t G(t) - \Xi(t)),
\end{equation*}
where we define $\Phi(k+1,k)=I$ for convenience. To study the limiting behavior of $\{\mathbf{x}_{i,k}\}$, we consider an auxiliary sequence $\{\mathbf{y}_k\}$ with the initial value as
\begin{equation}
  \mathbf{y}_0 = \sum_{i=1}^n \pi_i(0) \mathbf{x}_{i,0},
\end{equation}
and the sequence updates according to
\begin{equation}
  \label{eqn:y-iterates}
  \mathbf{y}_{k+1} =  \mathbf{y}_k + \sum_{i=1}^n \pi_i(k+1) \mathbf{u}_{i,k},
\end{equation}
where $\mathbf{u}_{i,k} = \mathbf{v}_{i,k} - \sum_{j=1}^n A_{ij}(k) \mathbf{x}_{j,k} + \xi_{i,k}$. 

Adopting a similar notation to $X(k)$ and $\Xi(k)$, we have $Y(k)=\mathbf{1}\mathbf{y}_k^T$. Furthermore, a common assumption on the boundedness of subgradients (equivalent to Lipschitz continuity) is introduced as follows:
\begin{assumption}
  \label{asp:bounded-subgradients}
  For all $i$, all subgradients of $f_i$ are uniformly bounded by $L$, i.e., $\|\mathbf{g}\|\le L$ for all $\mathbf{g} \in \partial f_i$, $\forall i$.
\end{assumption}
Under Assumptions \ref{asp:graph}-\ref{asp:bounded-subgradients}, we have the following lemma on the association between $\mathbf{x}_{i,k}$ and $\mathbf{y}_k$.
\begin{lemma}
  \label{lemma:conv-xk-yk}
  Let $\{\mathbf{x}_{i,k}\}$ be generated by (\ref{eqn:unconstrained-iterates}) and $\{\varrho_k\}$ be a square summable positive real sequence. Then under Assumptions \ref{asp:graph}-\ref{asp:bounded-subgradients}, it follows that
  \begin{enumerate}[label=(\alph*)]
    \item $\lim_{k\rightarrow \infty} \mathbb{E}\|\mathbf{x}_{i,k} - \mathbf{y}_k\| = 0$, $\forall i$;
    \item $\sum_{k=1}^\infty \varrho_k \mathbb{E}\|\mathbf{x}_{i,k} - \mathbf{y}_k\| < \infty$, $\forall i$;
    \item $\sum_{k=1}^\infty \varrho_k \mathbb{E}\|\mathbf{x}_{i,k+1} - \mathbf{x}_{i,k}\| < \infty$, $\forall i$.
  \end{enumerate}
\end{lemma}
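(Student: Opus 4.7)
The first move is to identify $\mathbf{y}_k$ explicitly. Writing the $\mathbf{y}$-recursion in the form $\mathbf{y}_{k+1} = \mathbf{y}_k + \sum_i \pi_i(k{+}1)(\mathbf{x}_{i,k+1} - \sum_j A_{ij}(k)\mathbf{x}_{j,k})$ and using the absolute probability relation $\pi(k) = \pi(k{+}1)A(k)$, a one-line induction on the initial condition yields $\mathbf{y}_k^T = \pi(k) X(k)$, so $\mathbf{y}_k$ is a $\pi(k)$-weighted average of the agent iterates. I would then unroll the recursion $X(k{+}1) = A(k)X(k) - \alpha_k G(k) + \Xi(k)$ to get
\begin{equation*}
  \mathbf{x}_{i,k+1}^T = \sum_{j}\Phi_{ij}(0,k)\mathbf{x}_{j,0}^T - \sum_{t=0}^{k}\sum_{j}\Phi_{ij}(t{+}1,k)\bigl(\alpha_t \mathbf{g}_{j,t}^T/A_{jj}(t) - \xi_{j,t}^T\bigr),
\end{equation*}
and apply the identity $\pi(t{+}1) = \pi(k{+}1)\Phi(t{+}1,k)$ to obtain the corresponding closed form for $\mathbf{y}_{k+1}^T$. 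Subtracting the two expressions and invoking Proposition \ref{prop:convergence-rates-backward-product} gives the key deterministic bound
\begin{equation*}
  \|\mathbf{x}_{i,k+1}-\mathbf{y}_{k+1}\| \le \Gamma(0,k)\sum_{j}\|\mathbf{x}_{j,0}\| + 2L\sum_{t=0}^{k}\alpha_t\Gamma(t{+}1,k)\sum_{j}\frac{1}{A_{jj}(t)},
\end{equation*}
where I used Assumption \ref{asp:bounded-subgradients} to control both $\|\mathbf{g}_{j,t}\|$ and, via Lemma \ref{lemma:projection-errors} and (\ref{eqn:estimation-projection-error}), $\|\xi_{j,t}\|$. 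By Assumption \ref{asp:conv-time-varying-matrix}(1), $\sum_j 1/A_{jj}(t) \le \chi$ uniformly in $t$, so the bound reduces to a combination of $\Gamma(0,k)$ and $\sum_{t}\alpha_t\Gamma(t{+}1,k)$.

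Part (a) is immediate: $\Gamma(0,k)\to 0$ by Lemma \ref{lemma:conv-Gamma}(1), and $\sum_{t\le k}\alpha_t\Gamma(t{+}1,k)\to 0$ by Lemma \ref{lemma:conv-Gamma}(2), since $\alpha_k\to 0$ (square-summability). For part (b), I multiply the bound by $\varrho_k$ and sum. The first term gives $\sum_k \varrho_k \Gamma(0,k) < \infty$ because $\Gamma(0,k)$ decays faster than any polynomial by Lemma \ref{lemma:conv-Gamma}(1) and $\varrho_k$ is bounded. The double sum is the delicate piece: applying the Cauchy--Schwarz inequality $\varrho_k\alpha_t \le \tfrac12(\varrho_k^2 + \alpha_t^2)$ splits it into
\begin{equation*}
  \sum_{k}\varrho_k^2\sum_{t\le k}\Gamma(t{+}1,k) \ \text{ and }\ \sum_{k}\sum_{t\le k}\alpha_t^2\Gamma(t{+}1,k).
\end{equation*}
The first is finite because $\sum_{t\le k}\Gamma(t,k)$ is uniformly bounded in $k$ (this is exactly the estimate (\ref{eqn:conv-sum-Gamma}) established in the proof of Lemma \ref{lemma:conv-Gamma}(3)) and $\{\varrho_k^2\}$ is summable; the second is finite by swapping the order of summation and applying Corollary \ref{col:xtk} with $x_t$ derived from $\gamma_t$ and $y_t = \alpha_t^2$. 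This yields (b).

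For part (c), the nonexpansiveness of $\mathcal{P}_\mathcal{X}$ together with $\mathbf{x}_{i,k}\in\mathcal{X}$ gives
\begin{equation*}
  \|\mathbf{x}_{i,k+1}-\mathbf{x}_{i,k}\| \le \|\mathbf{v}_{i,k} - \mathbf{x}_{i,k}\| \le \sum_{j}A_{ij}(k)\|\mathbf{x}_{j,k}-\mathbf{x}_{i,k}\| + \alpha_k L/A_{ii}(k).
\end{equation*}
Inserting $\mathbf{y}_k$ via the triangle inequality, $\|\mathbf{x}_{j,k}-\mathbf{x}_{i,k}\|\le \|\mathbf{x}_{j,k}-\mathbf{y}_k\|+\|\mathbf{x}_{i,k}-\mathbf{y}_k\|$, reduces (c) to controlling $\sum_k\varrho_k\|\mathbf{x}_{i,k}-\mathbf{y}_k\|$ (finite by part (b)) and $\sum_k \varrho_k\alpha_k/A_{ii}(k)$, which is finite by Cauchy--Schwarz together with the square-summability of both $\{\varrho_k\}$ and $\{\alpha_k/A_{ii}(k)\}$ (the latter because $1/A_{ii}(k)\le \chi$ uniformly). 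The main obstacle throughout is the careful bookkeeping in the double sum of part (b); everything else reduces to direct applications of Lemma \ref{lemma:conv-Gamma}, Corollary \ref{col:xtk}, and Lemma \ref{lemma:projection-errors}.
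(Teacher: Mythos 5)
Your proposal is correct and follows essentially the same route as the paper: unroll $X(k+1)$, identify $\mathbf{y}_k$ as the $\pi(k)$-weighted average, bound the difference by $\Gamma(0,k)$ plus $\sum_t \alpha_t\Gamma(t{+}1,k)$ via Proposition \ref{prop:convergence-rates-backward-product}, and conclude with Lemma \ref{lemma:conv-Gamma} and Corollary \ref{col:xtk}; your explicit AM--GM split of the mixed $\varrho_k\alpha_t$ sum is exactly the argument inside the paper's proof of Lemma \ref{lemma:conv-Gamma}(3). The only cosmetic difference is part (c), where you use nonexpansiveness of the projection and the update $\mathbf{v}_{i,k}$ instead of the paper's triangle inequality through $\mathbf{y}_{k+1}$ and $\mathbf{y}_k$; both reduce to part (b) together with $\sum_k\varrho_k\alpha_k<\infty$.
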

\begin{proof}
  \textbf{Proof of Part (a)}: According to the definition of $X(k)$ and $Y(k)$, we have
  \begin{equation*}
    \begin{aligned}
      X(k+1) - Y(k+1) = (\Phi(0,k) - \mathbf{1}\pi(0))X(0) \\ - \sum_{t=1}^k (\Phi(t+1,k)-\mathbf{1}\pi(t+1)) (\alpha_t G(t) - \Xi(t)).
    \end{aligned}
  \end{equation*}
  By Proposition \ref{prop:convergence-rates-backward-product}, we obtain
  \begin{equation}
    \|(\Phi(0,k) - \mathbf{1}\pi(0))X(0)\| \le n \Gamma(0,k) \|X(0)\|_\infty,
  \end{equation}
  and by Assumption \ref{asp:bounded-subgradients} and (\ref{eqn:estimation-projection-error}), it follows that
  \begin{equation}
    \mathbb{E}\|(\Phi(t+1,k)-\mathbf{1}\pi(t+1))G(t)\| \le n \Gamma(t+1,k) \chi L,
  \end{equation}
  \begin{equation}
    \mathbb{E}\|(\Phi(t+1,k)-\mathbf{1}\pi(t+1))\Xi(t)\| \!\le\! 2\alpha_t \chi L n\Gamma(t+1,k).
  \end{equation}
  According to Lemma \ref{lemma:conv-Gamma} item 2) and Assumption \ref{asp:conv-time-varying-matrix}, we have
  \begin{equation}
    \lim_{k\rightarrow \infty} \mathbb{E}\|X(k+1)-Y(k+1)\| = 0,
  \end{equation}
  which indicates $\lim_{k\rightarrow \infty} \mathbb{E}\|\mathbf{x}_{i,k} - \mathbf{y}_k\| = 0$, $\forall i$.

  \textbf{Proof of Part (b)}: By Lemma \ref{lemma:conv-Gamma} item 3) and Assumption \ref{asp:conv-time-varying-matrix}, we obtain
  \begin{subequations}
    \begin{equation}
      \sum_{k=0}^\infty \varrho_{k+1}\Gamma(0,k) < \infty,
    \end{equation}
    \begin{equation}
      \sum_{k=0}^\infty \varrho_{k+1} \sum_{t=1}^k \varrho_t \Gamma(t+1,k) < \infty.
    \end{equation}
  \end{subequations}
  Therefore, it follows that
  \begin{equation}
    \sum_{k=1}^\infty \varrho_{k} \mathbb{E}\|X(k)-Y(k)\| < \infty.
  \end{equation}
  Hence we conclude that $\sum_{k=1}^\infty \varrho_k \mathbb{E}\|\mathbf{x}_{i,k} - \mathbf{y}_k\| < \infty$, $\forall i$.

  \textbf{Proof of Part (c)}: From the proof of Part (a) and Part (b), we conclude that $\sum_{k=1}^\infty \varrho_k \|\mathbf{x}_{i,k+1} - \mathbf{y}_{k+1}\| < \infty$. Moreover,
  \begin{equation}
    \mathbb{E}\|\mathbf{y}_{k+1} - \mathbf{y}_k\| \le \sum_{i=1}^n \mathbb{E}\|\mathbf{u}_{i,k}\| 3\alpha_k L \chi.
  \end{equation}
  By dividing $\|\mathbf{x}_{i,k+1}-\mathbf{x}_{i,k}\|$ into three parts, i.e., $\|\mathbf{x}_{i,k+1}- \mathbf{y}_{k+1}\|$, $\|\mathbf{y}_{k+1} - \mathbf{y}_{k}\|$, and $\|\mathbf{x}_{i,k} - \mathbf{y}_k\|$, we obtain $\sum_{k=1}^\infty \varrho_k \mathbb{E}\|\mathbf{x}_{i,k+1} - \mathbf{x}_{i,k}\| < \infty$.
\end{proof}
In the following analysis, we restrict our discussion to the case in which $\alpha_k \in \mathcal{O}(1/k)$. 

\vspace{-8pt}
\subsection{Convex Local Objective Functions}
We proceed to reveal the iteration relation between $\mathbf{y}_{k+1}$ and $\mathbf{y}_k$ which is critical to the convergence of $\{\mathbf{y}_k\}$, regarding the cases where each $f_i$ is convex. 
\begin{lemma}
  Under Assumptions \ref{asp:graph}-\ref{asp:bounded-subgradients}, for any vector $\mathbf{x} \in \mathcal{X}$, the following iteration holds:
  \begin{equation*}
    \begin{aligned}
      \mathbb{E}\|\mathbf{y}_{k+1}-\mathbf{x}\|^2 \le \mathbb{E}\|\mathbf{y}_k-\mathbf{x}\|^2 \! +\! \frac{2\alpha_k}{n}\mathbb{E}[f(\mathbf{x}) - f(\mathbf{y}_k)]\\
      + 4\zeta_k^2 + 4\zeta_k \sum_{i=1}^n \left( 2\mathbb{E}\|\mathbf{y}_k - \mathbf{x}_{i,k}\| + \mathbb{E}\|\mathbf{x}_{i,k+1} - \mathbf{x}_{i,k}\| \right)\\
      + 2 \zeta_k \! \left[ \eta + \sum_{i=1}^n \|\mathbf{y}_k - \mathbf{x}_{i,k}\| \right]  \left| \frac{\pi_i(k+1)}{A_{ii}(k)} - \frac{1}{n} \right|,
    \end{aligned}
  \end{equation*}
  where $\zeta_k = \alpha_k \chi L$. Furthermore, it follows that
  \begin{equation}
    \label{eqn:finiteness-of-f-dev}
    \sum_{k=1}^\infty \alpha_k \mathbb{E}\left[f(\mathbf{y}_k) - f(\mathbf{x})\right] < \infty.
  \end{equation}
\end{lemma}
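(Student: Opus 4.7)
The plan is to start from the recursion $\mathbf{y}_{k+1}=\mathbf{y}_k+\sum_{i=1}^n \pi_i(k+1)\mathbf{u}_{i,k}$ with $\mathbf{u}_{i,k}=-\alpha_k \mathbf{g}_{i,k}/A_{ii}(k)+\xi_{i,k}$, expand the squared distance to $\mathbf{x}$, and peel off a $\tfrac{2\alpha_k}{n}[f(\mathbf{x})-f(\mathbf{y}_k)]$ descent quantity from the cross term via convexity. First I would write
\[\|\mathbf{y}_{k+1}-\mathbf{x}\|^2=\|\mathbf{y}_k-\mathbf{x}\|^2+2\Big\langle\sum_i \pi_i(k+1)\mathbf{u}_{i,k},\mathbf{y}_k-\mathbf{x}\Big\rangle+\Big\|\sum_i \pi_i(k+1)\mathbf{u}_{i,k}\Big\|^2.\]
For the squared piece I would use $\|\mathbf{u}_{i,k}\|\le 2\alpha_k L/A_{ii}(k)$ together with $\sum_i \pi_i(k+1)/A_{ii}(k)\le\chi$ (which follows from Cauchy-Schwarz since $\pi(k+1)$ is a probability vector and $\sum_i 1/A_{ii}(k)^2\le\chi^2$ by Assumption \ref{asp:conv-time-varying-matrix}(1)), producing the $4\zeta_k^2$ term.

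For the cross term I would split along the structure of $\mathbf{u}_{i,k}$. The gradient contribution $-2\alpha_k\sum_i \tfrac{\pi_i(k+1)}{A_{ii}(k)}\langle \mathbf{g}_{i,k},\mathbf{y}_k-\mathbf{x}\rangle$ I would rewrite by adding and subtracting $1/n$ in the coefficient. Applying convexity of $f_i$ via the decomposition $\langle\mathbf{g}_{i,k},\mathbf{y}_k-\mathbf{x}\rangle=\langle\mathbf{g}_{i,k},\mathbf{x}_{i,k}-\mathbf{x}\rangle+\langle\mathbf{g}_{i,k},\mathbf{y}_k-\mathbf{x}_{i,k}\rangle$ together with Assumption \ref{asp:bounded-subgradients} yields $\langle\mathbf{g}_{i,k},\mathbf{y}_k-\mathbf{x}\rangle\ge f_i(\mathbf{y}_k)-f_i(\mathbf{x})-2L\|\mathbf{y}_k-\mathbf{x}_{i,k}\|$, and summing over $i$ produces the $\tfrac{2\alpha_k}{n}[f(\mathbf{x})-f(\mathbf{y}_k)]$ term plus a consensus-error term absorbed into $4\zeta_k\cdot 2\|\mathbf{y}_k-\mathbf{x}_{i,k}\|$ (using $1/n\le\chi$) and the $|\pi_i(k+1)/A_{ii}(k)-1/n|$ correction term with bracketed factor $[\eta+\sum_i\|\mathbf{y}_k-\mathbf{x}_{i,k}\|]$, where $\eta$ comes from bounding $\|\mathbf{y}_k-\mathbf{x}\|\le\eta$ (since $\mathbf{y}_k\in\mathcal{X}$ by convexity).

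For the projection-error contribution $2\sum_i\pi_i(k+1)\langle\xi_{i,k},\mathbf{y}_k-\mathbf{x}\rangle$, I would decompose $\mathbf{y}_k-\mathbf{x}=(\mathbf{y}_k-\mathbf{x}_{i,k+1})+(\mathbf{x}_{i,k+1}-\mathbf{x})$. Lemma \ref{lemma:projection-errors}(a) applied at $\mathbf{v}_{i,k}$ with $\mathbf{x}\in\mathcal{X}$, combined with $\mathbf{x}_{i,k+1}=\mathbf{v}_{i,k}+\xi_{i,k}$, makes the second inner product non-positive. The first is controlled by $\|\xi_{i,k}\|(\|\mathbf{y}_k-\mathbf{x}_{i,k}\|+\|\mathbf{x}_{i,k+1}-\mathbf{x}_{i,k}\|)$ via Cauchy-Schwarz and the triangle inequality, and the bound $\|\xi_{i,k}\|\le\alpha_k L/A_{ii}(k)$ together with the $\chi$-bound contributes the $4\zeta_k\sum_i\|\mathbf{x}_{i,k+1}-\mathbf{x}_{i,k}\|$ term and reinforces the $\|\mathbf{y}_k-\mathbf{x}_{i,k}\|$ coefficient. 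Collecting and taking expectations gives the stated inequality.

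For the summability conclusion I would choose $\mathbf{x}=\mathbf{x}^*\in\mathcal{X}^*$ so that $\mathbb{E}[f(\mathbf{y}_k)-f(\mathbf{x}^*)]\ge 0$, rearrange to isolate $\tfrac{2\alpha_k}{n}\mathbb{E}[f(\mathbf{y}_k)-f(\mathbf{x}^*)]$, and apply the Robbins-Siegmund supermartingale theorem. The residuals are summable in $k$: $\zeta_k^2$ by square summability of $\alpha_k$; the $\zeta_k\sum_i\mathbb{E}\|\mathbf{y}_k-\mathbf{x}_{i,k}\|$ and $\zeta_k\sum_i\mathbb{E}\|\mathbf{x}_{i,k+1}-\mathbf{x}_{i,k}\|$ terms by Lemma \ref{lemma:conv-xk-yk}(b)(c) with $\varrho_k=\zeta_k$; and the $\pi$-deviation term by combining Theorem \ref{theorem:conv-absolute-prob-seq}, Assumption \ref{asp:conv-time-varying-matrix}(1), the identity $\pi_i(k+1)/A_{ii}(k)-1/n=[\pi_i(k+1)-1/n+(1-A_{ii}(k))/n]/A_{ii}(k)$, and $\|\mathbf{y}_k-\mathbf{x}_{i,k}\|\le\eta$. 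The main obstacle I anticipate is the careful accounting in the projection-error term, namely isolating $\|\mathbf{x}_{i,k+1}-\mathbf{x}_{i,k}\|$ in a way that activates Lemma \ref{lemma:conv-xk-yk}(c) while simultaneously routing the $\pi_i(k+1)/A_{ii}(k)-1/n$ deviation through the $\chi$-based uniform coefficient $4\zeta_k$ and keeping the convexity-supplied coefficient $\tfrac{2\alpha_k}{n}$ intact.
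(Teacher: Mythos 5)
Your proposal is correct and follows essentially the same route as the paper's proof: the same three-part expansion of $\|\mathbf{y}_{k+1}-\mathbf{x}\|^2$, the same use of Lemma \ref{lemma:projection-errors} to reduce $\langle \xi_{i,k}, \mathbf{y}_k-\mathbf{x}\rangle$ to $\|\xi_{i,k}\|\left(\|\mathbf{y}_k-\mathbf{x}_{i,k}\|+\|\mathbf{x}_{i,k+1}-\mathbf{x}_{i,k}\|\right)$, the same add-and-subtract of $1/n$ in the coefficient $\pi_i(k+1)/A_{ii}(k)$ (with the subgradient bound and $\|\mathbf{y}_k-\mathbf{x}\|\le \eta+\sum_i\|\mathbf{y}_k-\mathbf{x}_{i,k}\|$) to extract $\frac{2\alpha_k}{n}[f(\mathbf{x})-f(\mathbf{y}_k)]$, and the same summability bookkeeping via Lemma \ref{lemma:conv-xk-yk}, Theorem \ref{theorem:conv-absolute-prob-seq}, Assumption \ref{asp:conv-time-varying-matrix}, and the standing restriction $\alpha_k\in\mathcal{O}(1/k)$. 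Your minor variations (specializing to $\mathbf{x}=\mathbf{x}^*$ with a Robbins--Siegmund argument for the final series, and observing $\mathbf{y}_k\in\mathcal{X}$) are harmless and do not change the argument.
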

\begin{proof}
  Unrolling the iterates of $\mathbf{y}_k$, we obtain
  \begin{equation}
    \mathbb{E}\|\mathbf{y}_{k+1} - \mathbf{y}_{k}\|^2 = \mathbb{E}\left\|\sum_{i=1}^n \pi_i(k+1) \mathbf{u}_{i,k} \right\|^2 \le 4\alpha_k^2 L^2 \chi^2.
  \end{equation}
  Moreover, by Lemma \ref{lemma:projection-errors} it follows that
  \begin{subequations}
    \begin{equation}
      \mathbb{E}\langle \xi_{i,k}, \mathbf{y}_k-\mathbf{x}_{i,k} \rangle \le \mathbb{E}\|\xi_{i,k}\| \mathbb{E}\|\mathbf{y}_k-\mathbf{x}_{i,k}\|,
    \end{equation}
    \begin{equation}
      \mathbb{E}\langle \xi_{i,k}, \mathbf{x}_{i,k}-\mathbf{x}_{i,k+1} \rangle \le \mathbb{E}\|\xi_{i,k}\| \mathbb{E}\|\mathbf{x}_{i,k}-\mathbf{x}_{i,k+1}\|,
    \end{equation}
    \begin{equation}
      \mathbb{E}\langle \xi_{i,k}, \mathbf{x}_{i,k+1}-\mathbf{v}_{i,k} \rangle = \mathbb{E}\|\xi_{i,k}\|^2,
    \end{equation}
    \begin{equation}
      \mathbb{E}\langle \xi_{i,k}, \mathbf{v}_{i,k}-\mathbf{x} \rangle \le - \mathbb{E}\|\xi_{i,k}\|^2.
    \end{equation}
  \end{subequations}
  Combining the equations above, we obtain
  \begin{equation}
    \mathbb{E}\langle \xi_{i,k}, \mathbf{y}_k -\mathbf{x}\rangle \le \mathbb{E}\|\xi_{i,k}\| \mathbb{E}(\|\mathbf{y}_k-\mathbf{x}_{i,k}\| + \|\mathbf{x}_{i,k}-\mathbf{x}_{i,k+1}\|).
  \end{equation}
  Next we analyze the critical term which contains the factor $1/A_{ii}(k)$. First, it follows directly from the boundedness of the subgradients that
  \begin{equation*}
    \alpha_k \sum_{i=1}^n \pi_i(k+1) \! \left\langle \! \frac{\mathbf{g}_{i,k}}{A_{ii}(k)}, \! \mathbf{y}_k-\mathbf{x}_{i,k} \! \right\rangle \! \le \! \alpha_k\! \chi\! L \!\sum_{i=1}^n \! \|\mathbf{y}_k-\mathbf{x}_{i,k}\|.
  \end{equation*}
  Since each $f_i$ is convex, we have
  \begin{equation}
    \langle \mathbf{g}_{i,k}, \mathbf{x} - \mathbf{x}_{i,k} \rangle \le f_i(\mathbf{x}) - f_i(\mathbf{y}_k) + f_i(\mathbf{y}_k) - f_i(\mathbf{x}_{i,k}).
  \end{equation}
  To derive $f(\mathbf{x})-f(\mathbf{y}_k)$, a little trick is applied:
  \begin{equation}
    \begin{aligned}
      \sum_{i=1}^n \frac{\pi_i(k+1)}{A_{ii}(k)} (f_i(\mathbf{x}) - f_i(\mathbf{y}_k)) \le \frac{1}{n}\left[f(\mathbf{x}) - f(\mathbf{y}_k)\right] + \\ \sum_{i=1}^n L \left| \frac{\pi_i(k+1)}{A_{ii}(k)} - \frac{1}{n} \right| \|\mathbf{y}_k-\mathbf{x}\|.
    \end{aligned}
  \end{equation}
  Considering the boundedness of $\mathcal{X}$, we have
  \begin{equation}
    \|\mathbf{y}_k - \mathbf{x}\| \le \eta + \sum_{i=1}^n \|\mathbf{y}_k - \mathbf{x}_{i,k}\|.
  \end{equation}
  Divide $\|\mathbf{y}_{k+1}-\mathbf{x}\|^2$ into three parts as $\|\mathbf{y}_k-\mathbf{x}\|^2$, $\|\mathbf{y}_{k+1}-\mathbf{y}_k\|^2$, and $\langle \mathbf{y}_{k+1}-\mathbf{y}_k, \mathbf{y}_k-\mathbf{x} \rangle$. We obtain the following iteration between $\|\mathbf{y}_{k+1}-\mathbf{x}\|^2$ and $\|\mathbf{y}_k-\mathbf{x}\|^2$:
  \begin{equation*}
    \begin{aligned}
      \mathbb{E}\|\mathbf{y}_{k+1}-\mathbf{x}\|^2 \le \mathbb{E}\|\mathbf{y}_k-\mathbf{x}\|^2 \! +\! \frac{2\alpha_k}{n}\mathbb{E}[f(\mathbf{x}) - f(\mathbf{y}_k)]\\
      + 4\zeta_k^2 + 4\zeta_k \sum_{i=1}^n \left( 2\mathbb{E}\|\mathbf{y}_k - \mathbf{x}_{i,k}\| + \mathbb{E}\|\mathbf{x}_{i,k+1} - \mathbf{x}_{i,k}\| \right)\\
      + 2 \zeta_k \! \left[ \eta + \sum_{i=1}^n \|\mathbf{y}_k - \mathbf{x}_{i,k}\| \right]  \left| \frac{\pi_i(k+1)}{A_{ii}(k)} - \frac{1}{n} \right|,
    \end{aligned}
  \end{equation*}
  where $\zeta_k = \alpha_k L \chi$. Since we have
  \begin{equation}
    \left| \frac{\pi_i(k+1)}{A_{ii}(k)} - \frac{1}{n} \right| \le \|I-A(k)\| + \left|\pi_i(k+1)-\frac{1}{n}\right|,
  \end{equation}
  it follows from Lemma \ref{lemma:conv-Gamma}, Lemma \ref{lemma:conv-xk-yk} and Theorem \ref{theorem:conv-absolute-prob-seq} that
  \begin{equation}
    \sum_{k=1}^\infty \alpha_k \mathbb{E}\left[ f(\mathbf{y}_k) - f(\mathbf{x}) \right] < \infty,
  \end{equation}
  which completes the proof.
\end{proof}

On the foundation of the lemmas above, the next theorem establishes the main convergence result for UDPSG.
\begin{theorem}
  Let Assumptions \ref{asp:graph}-\ref{asp:conv-time-varying-matrix} hold and $\{\mathbf{x}_{i,k}\}$ be generated by (\ref{eqn:unconstrained-iterates}). Then there exists a solution $\mathbf{x}^* \in \mathcal{X}^*$ to the optimization problem (\ref{eqn:optimization-objective}) such that for all $i$,
  \begin{equation}
    \lim_{k\rightarrow \infty} \mathbf{x}_{i,k} = \mathbf{x}^* \quad \text{a.s.}
  \end{equation}
\end{theorem}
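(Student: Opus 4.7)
The plan is to apply a Robbins-Siegmund style supermartingale convergence theorem to the recursion from the preceding lemma, extract a cluster point in $\mathcal{X}^*$, upgrade to convergence of the full auxiliary sequence $\{\mathbf{y}_k\}$, and finally transfer this convergence to the agent iterates $\{\mathbf{x}_{i,k}\}$ via Lemma \ref{lemma:conv-xk-yk}.

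First, I would fix an arbitrary $\mathbf{x}^* \in \mathcal{X}^*$ and substitute it into that recursion. Writing $\zeta_k = \alpha_k \chi L \in \mathcal{O}(1/k)$, the task is to show that every additive remainder is almost surely summable: $\sum \zeta_k^2 < \infty$ by square summability of $\{\alpha_k\}$; $\sum \zeta_k \bigl(\|\mathbf{y}_k - \mathbf{x}_{i,k}\| + \|\mathbf{x}_{i,k+1}-\mathbf{x}_{i,k}\|\bigr) < \infty$ by Lemma \ref{lemma:conv-xk-yk}(b)(c) invoked with $\varrho_k = \zeta_k$; and $\sum \zeta_k \bigl|\pi_i(k+1)/A_{ii}(k) - 1/n\bigr| < \infty$ by splitting the summand via the triangle inequality into $\|I - A(k)\|$ and $|\pi_i(k+1)-1/n|$, controlled by Assumption \ref{asp:conv-time-varying-matrix}(1) and Theorem \ref{theorem:conv-absolute-prob-seq} respectively. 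Robbins-Siegmund then delivers simultaneously (i) almost sure convergence of $\|\mathbf{y}_k - \mathbf{x}^*\|^2$ to a finite non-negative random variable, and (ii) $\sum_k \alpha_k [f(\mathbf{y}_k) - f(\mathbf{x}^*)] < \infty$ almost surely, which combined with $\sum \alpha_k = \infty$ forces $\liminf_k f(\mathbf{y}_k) \le f(\mathbf{x}^*) = f^*$ a.s.

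Next, on the full-measure event where conclusion (i) holds, the sequence $\{\mathbf{y}_k\}$ is bounded, so I would extract a subsequence $\mathbf{y}_{k_j} \to \tilde{\mathbf{x}}$ along which $f(\mathbf{y}_{k_j}) \to f^*$. Because $\|\mathbf{x}_{i,k_j} - \mathbf{y}_{k_j}\| \to 0$ by Lemma \ref{lemma:conv-xk-yk}(a) (which under the almost sure subgradient bound of Assumption \ref{asp:bounded-subgradients} holds pathwise, not merely in mean) and $\mathbf{x}_{i,k_j} \in \mathcal{X}$ with $\mathcal{X}$ closed, the limit $\tilde{\mathbf{x}}$ lies in $\mathcal{X}$; the lower semi-continuity of the convex $f$ then yields $\tilde{\mathbf{x}} \in \mathcal{X}^*$. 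Re-applying conclusion (i) with $\mathbf{x}^* = \tilde{\mathbf{x}}$ shows that $\|\mathbf{y}_k - \tilde{\mathbf{x}}\|$ converges almost surely, and since a subsequence converges to $0$, the whole sequence does, giving $\mathbf{y}_k \to \tilde{\mathbf{x}}$ a.s. Combining this with $\|\mathbf{x}_{i,k} - \mathbf{y}_k\| \to 0$ produces $\mathbf{x}_{i,k} \to \tilde{\mathbf{x}} \in \mathcal{X}^*$ a.s. for every $i$, as required.

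The principal obstacle is the bookkeeping of Step 1, specifically the summability of $\sum \zeta_k |\pi_i(k+1)/A_{ii}(k) - 1/n|$. Unlike in standard doubly-stochastic PSG, this inhomogeneity term sits outside the classical analysis and its control crucially relies on the nearly-exponential decay of $\Gamma(s,k)$ from Lemma \ref{lemma:conv-Gamma}, the $\ln k$-weighted summability built into Assumption \ref{asp:conv-time-varying-matrix}, and the rate $\sum (1/k) |\pi_i(k) - 1/n| < \infty$ of Theorem \ref{theorem:conv-absolute-prob-seq}. A secondary subtlety is that $\tilde{\mathbf{x}}$ is a priori sample-path-dependent, so conclusion (i) must be arranged to hold simultaneously for every point of a countable dense subset of $\mathcal{X}^*$ (which is separable as a subset of $\mathbb{R}^m$); this is handled by intersecting the corresponding null sets before performing the second Robbins-Siegmund invocation.
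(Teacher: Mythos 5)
Your proposal follows essentially the same route as the paper: substitute a minimizer into the recursion of the preceding lemma, invoke the supermartingale (Robbins--Siegmund type) convergence theorem of \cite{1971ConvSupermartingale} together with non-summability of $\alpha_k$ to extract a subsequence of $\{\mathbf{y}_k\}$ converging to a point of $\mathcal{X}^*$, upgrade this to convergence of the whole sequence, and transfer the limit to the iterates $\{\mathbf{x}_{i,k}\}$ via Lemma \ref{lemma:conv-xk-yk}. Your handling of the summability bookkeeping and of the sample-path dependence of the limit point (intersecting null sets over a countable dense subset of $\mathcal{X}^*$) is in fact more explicit than the paper's argument, which works at the level of expectations.
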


\begin{proof}
  Since $\alpha_k$ is non-summable, (\ref{eqn:finiteness-of-f-dev}) indicates that 
  \begin{equation}
    \liminf_{k\rightarrow \infty} \mathbb{E}\left[ f(\mathbf{y}_k) - f(\mathbf{z}) \right] = 0,
  \end{equation}
  for any $\mathbf{z} \in \mathcal{X}^*$. Then there exists a subsequence $\{\mathbf{y}_{k_\ell}\}$ and a point $\mathbf{x}^*\in \mathcal{X}^*$ such that $\lim_{\ell \rightarrow \infty} \| \mathbf{y}_{k_\ell} - \mathbf{x}^*\| = 0$.
  
  Applying the result in \cite[Theorem 1]{1971ConvSupermartingale}, we conclude that $\|\mathbf{y}_k - \mathbf{z}\|^2$ converges for any $\mathbf{z} \in \mathcal{X}^*$ and hence for $\mathbf{x}^*$. Since the limit of a convergent sequence coincides with the limit of any convergent subsequence, we obtain $\lim_{k \rightarrow \infty} \| \mathbf{y}_{k} - \mathbf{x}^*\| = 0$, i.e., $\lim_{k \rightarrow \infty} \mathbf{y}_{k} = \mathbf{x}^*$. By Lemma \ref{lemma:conv-xk-yk}, we have $\lim_{k \rightarrow \infty} \mathbf{x}_{i,k} = \mathbf{x}^*$ a.s. $\forall i$.
\end{proof}

\vspace{-8pt}
\subsection{Non-Convex Local Objective Functions}
In practical multi-agent networks, the local functions are not necessarily convex. One of the most significant problems is to determine whether the decentralized algorithm converges when the local objective functions are smooth and non-convex, which are widely used in neural networks for deep learning applications. In this part, we will investigate the convergence behavior of UDPSG when the local objective functions are not convex but have several additional properties.

We will start with the Polyak-Lojasiewicz (PL) inequality \cite{1963POLYAK864}. Let $h$ be a smooth function defined in a real Hilbert space. $h$ is said to satisfy the PL inequality if for some $\mu > 0$, the following holds:
\begin{equation}
  \label{eqn:PL-cond}
  \frac{1}{2} \|\nabla h (x)\|^2 \ge \mu \left( h(x)- h^* \right), \quad \forall x,
\end{equation}
where $h^*$ is the global minimum of $h(x)$. Based on (\ref{eqn:PL-cond}), we conclude that UDPSG remains to be convergent if each local function satisfies the PL inequality:
\begin{lemma}
  \label{lemma:conv-PL-local-functions}
  Assume that each local function $f_i$ has an $L$-Lipschitz continuous gradient and satisfies PL inequality (\ref{eqn:PL-cond}). Letting Assumptions \ref{asp:graph}-\ref{asp:conv-time-varying-matrix} hold, we have
  \begin{equation}
    \lim_{k\rightarrow \infty} \mathbb{E}[f_i (\mathbf{x}_{i,k}) - f_i^*] = 0,
  \end{equation}
  where $f_i^*$ is the global minimum of $f_i$.
\end{lemma}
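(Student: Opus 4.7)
The plan is to derive a stochastic contraction
\begin{equation*}
d_{i,k+1} \le (1 - c\mu\alpha_k)\,d_{i,k} + e_k, \qquad \sum_k e_k < \infty,
\end{equation*}
for $d_{i,k} := \mathbb{E}[f_i(\mathbf{x}_{i,k}) - f_i^*]$ and some constant $c > 0$, and then to unroll the recursion, apply a standard $\varepsilon$-cutoff argument to the convolution sum, and combine with Lemma \ref{lemma:conv-product-non-summable-sequence} to conclude $d_{i,k}\to 0$.

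First I would apply the descent lemma for $L$-Lipschitz-gradient functions to $f_i$ along consecutive iterates,
\begin{equation*}
f_i(\mathbf{x}_{i,k+1}) \le f_i(\mathbf{x}_{i,k}) + \langle \mathbf{g}_{i,k}, \Delta_k\rangle + \frac{L}{2}\|\Delta_k\|^2,
\end{equation*}
with $\Delta_k := \mathbf{x}_{i,k+1}-\mathbf{x}_{i,k}$. Using (\ref{eqn:unconstrained-iterates}) together with the row-stochasticity of $A(k)$, the increment decomposes as
\begin{equation*}
\Delta_k = -\frac{\alpha_k}{A_{ii}(k)}\mathbf{g}_{i,k} + v_k + \xi_{i,k}, \quad v_k := \sum_{j\neq i}A_{ij}(k)(\mathbf{x}_{j,k}-\mathbf{x}_{i,k}),
\end{equation*}
so that the inner product produces the dissipation $-\alpha_k\|\mathbf{g}_{i,k}\|^2/A_{ii}(k)\le -\alpha_k\|\mathbf{g}_{i,k}\|^2$, which the PL inequality (\ref{eqn:PL-cond}) converts into the contracting factor $-2\mu\alpha_k\,d_{i,k}$. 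The projection pieces are handled via Lemma \ref{lemma:projection-errors}: the rearrangement $\langle \xi_{i,k}, \Delta_k\rangle\le 0$ absorbs the linear projection contribution, while $\|\Delta_k\|^2\le \|\mathbf{v}_{i,k}-\mathbf{x}_{i,k}\|^2$ splits the quadratic term into an $O(\alpha_k^2)$ gradient piece plus an $O(\|v_k\|^2)$ consensus piece. Taking expectations and invoking Lemma \ref{lemma:conv-xk-yk}(b)--(c) with $\varrho_k=\alpha_k$ (square summable by $\alpha_k\in\mathcal{O}(1/k)$), together with Assumption \ref{asp:conv-time-varying-matrix} for the summability of $\alpha_k(1-A_{ii}(k))/A_{ii}(k)$, discharges the $\alpha_k^2$, projection, and first-order consensus contributions to $e_k$.

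The main obstacle will be the consensus cross-term $\langle \mathbf{g}_{i,k}, v_k\rangle$, which carries no natural $\alpha_k$ prefactor. Applying Young's inequality with weight $\alpha_k$ gives $|\langle \mathbf{g}_{i,k}, v_k\rangle|\le \tfrac{\alpha_k}{2}\|\mathbf{g}_{i,k}\|^2 + \tfrac{1}{2\alpha_k}\|v_k\|^2$, which sacrifices a controllable fraction of the dissipation (leaving $c\in(0,2)$) but shifts the burden onto $\mathbb{E}\|v_k\|^2/\alpha_k$. Making this summable requires a second-order consensus estimate beyond the first-order bound of Lemma \ref{lemma:conv-xk-yk}; I would attempt to extract it either by iterating the arguments in that lemma's proof in the $L^2$-norm to obtain $\sum_k \mathbb{E}\|X(k)-Y(k)\|^2/\alpha_k < \infty$, or by invoking the diameter bound $\|v_k\|\le\eta$ from the boundedness of $\mathcal{X}$ to linearize $\|v_k\|^2\le\eta\|v_k\|$ and then pair with Lemma \ref{lemma:conv-xk-yk}(c) applied to $\sum_k \alpha_k \mathbb{E}\|\Delta_k\|$. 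Once $\sum_k e_k<\infty$ is in hand, unrolling the recursion yields
\begin{equation*}
d_{i,k+1} \le \prod_{t=1}^k(1-c\mu\alpha_t)\,d_{i,0} + \sum_{s=1}^k\prod_{t=s+1}^k(1-c\mu\alpha_t)\,e_s,
\end{equation*}
whose first term vanishes by Lemma \ref{lemma:conv-product-non-summable-sequence} and whose convolution sum vanishes by choosing $M$ with $\sum_{s\ge M}e_s<\varepsilon$ so that the tail contributes at most $\varepsilon$ and the prefix is multiplied by a product that tends to zero, delivering $d_{i,k}\to 0$ and completing the proof.
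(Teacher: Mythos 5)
Your overall skeleton (descent lemma for the $L$-smooth $f_i$, PL inequality converting the $-\alpha_k\|\nabla f_i(\mathbf{x}_{i,k})\|^2/A_{ii}(k)$ term into a contraction factor, then unrolling and invoking Lemma \ref{lemma:conv-product-non-summable-sequence}) is the same as the paper's. The genuine gap is in your treatment of the consensus cross-term $\langle \mathbf{g}_{i,k}, v_k\rangle$. Your Young's-inequality route shifts the burden onto $\sum_k \mathbb{E}\|v_k\|^2/\alpha_k$, and with $\alpha_k\in\mathcal{O}(1/k)$ this requires consensus errors summable against the weight $k$; but Lemma \ref{lemma:conv-xk-yk} only yields summability of $\mathbb{E}\|\mathbf{x}_{i,k}-\mathbf{y}_k\|$ against a \emph{square-summable} weight $\varrho_k$ (i.e., weight $1/k$), and the underlying rate is governed by the convolution $\sum_{t\le k}\alpha_t\Gamma(t+1,k)$, which Lemma \ref{lemma:conv-Gamma} shows vanishes but is never shown to be $o(\alpha_k)$ or to satisfy the second-order bound $\sum_k k\,\mathbb{E}\|X(k)-Y(k)\|^2<\infty$ you would need. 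Your fallback of linearizing via the diameter, $\|v_k\|^2\le \eta\|v_k\|$, does not help: it still demands $\sum_k \mathbb{E}\|v_k\|/\alpha_k \sim \sum_k k\,\mathbb{E}\|v_k\|<\infty$, again far beyond what the available lemmas provide. So as written, the claimed $\sum_k e_k<\infty$ is not established, and there is no evident way to get it from the paper's toolbox.

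The paper avoids this issue entirely by exploiting the uniform bound on (sub)gradients (Assumption \ref{asp:bounded-subgradients}): it estimates the cross-term directly as $\bigl|\langle \nabla f_i(\mathbf{x}_{i,k}), v_k\rangle\bigr| \le L\|v_k\| \le 2L\sum_{j}\|\mathbf{x}_{j,k}-\mathbf{y}_k\|$, with no $\alpha_k$ prefactor needed, and lumps it (together with the projection term $\|\xi_{i,k}\|\le \alpha_k\chi L$ and the quadratic term $\tfrac{L}{2}\|\mathbf{x}_{i,k+1}-\mathbf{x}_{i,k}\|^2$) into a remainder $\lambda_k$ that merely has to vanish; the full contraction factor $1-2\mu\alpha_k/A_{ii}(k)$ is kept intact, and the unrolled convolution is then dispatched using Lemmas \ref{lemma:conv-product-non-summable-sequence}, \ref{lemma:conv-Gamma} and \ref{lemma:conv-xk-yk}, whose $\Gamma$-based bounds control exactly this structure. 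If you replace your Young step by this Cauchy--Schwarz-plus-bounded-gradient estimate and adapt your finishing argument to a remainder that vanishes (rather than insisting on summability of $e_k$), your proof aligns with the paper's; as it stands, the second-order consensus estimate you rely on is a missing ingredient that the rest of the paper does not supply.
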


\begin{proof}
  We first analyze each term of $\langle \nabla f_i(\mathbf{x}_{i,k}), \mathbf{x}_{i,k+1}-\mathbf{x}_{i,k} \rangle$ by using the iteration in (\ref{eqn:xik-projection-error}) and (\ref{eqn:vik}):
  \begin{subequations}
    \begin{equation}
      \langle \nabla f_i(\mathbf{x}_{i,k}), \xi_{i,k} \rangle \le \|\xi_{i,k}\|,
    \end{equation}
    \begin{equation}
      \left\langle \nabla f_i(\mathbf{x}_{i,k}), \sum_{j=1}^n \mathbf{x}_{j,k}-\mathbf{x}_{i,k} \right\rangle \le 2L \sum_{i=1}^n \|\mathbf{x}_{i,k}-\mathbf{y}_k\|,
    \end{equation}
    \begin{equation}
      -\frac{\alpha_k}{A_{ii}(k)} \|\nabla f_i(\mathbf{x}_{i,k})\|^2 \le -\frac{2\mu \alpha_k}{A_{ii}(k)} (f_i(\mathbf{x}_{i,k})-f_i^*).
    \end{equation}
  \end{subequations}
  Since each $f_i$ has an $L$-Lipschitz continuous gradient, we have
  \begin{equation}
    \begin{aligned}
      f_i (\mathbf{x}_{i,k+1}) \le &f_i(\mathbf{x}_{i,k}) + \frac{L}{2} \|\mathbf{x}_{i,k+1} - \mathbf{x}_{i,k}\|^2 \\ &+ \langle \nabla f_i(\mathbf{x}_{i,k}), \mathbf{x}_{i,k+1}-\mathbf{x}_{i,k} \rangle.
    \end{aligned}
  \end{equation}
  From the derivation above, we conclude
  \begin{equation}
    f_i (\mathbf{x}_{i,k+1}) - f_i^* \le \left(1-\frac{2\mu \alpha_k}{A_{ii}(k)}\right) f_i(\mathbf{x}_{i,k}) - f_i^* + \lambda_{k},
  \end{equation}
  where $\lambda_k$ denotes the remainder. Unrolling the iteration and taking expectation at both sides, we have
  \begin{equation}
    \label{eqn:PL-recursive}
    \begin{aligned}
      \mathbb{E}[f_i (\mathbf{x}_{i,k+1}) - f_i^*] \le \prod_{t=1}^k \left(1-\frac{2\mu \alpha_t}{A_{ii}(t)} \right) \mathbb{E}[f_i^0 - f_i^*] \\ + \sum_{t=1}^k \prod_{r=1}^{k-t} \left(1-\frac{2\mu \alpha_r}{A_{ii}(r)} \right) \mathbb{E}\lambda_t,
    \end{aligned}
  \end{equation}
  where $f_i^0=f_i(\mathbf{x}_{i,0})$ is the initial value of $f_i$. Since $\lambda_k \rightarrow 0$ as $k \rightarrow \infty$, we obtain by Lemmas \ref{lemma:conv-product-non-summable-sequence}, \ref{lemma:conv-Gamma}, and \ref{lemma:conv-xk-yk}
  \begin{equation}
    \lim_{k\rightarrow \infty} \mathbb{E}[f_i (\mathbf{x}_{i,k}) - f_i^*] = 0, \quad \forall i,
  \end{equation}
  which completes the proof.
\end{proof}

Since each $\mathbf{x}_{i,k}$ converges to the global minima of $f_i$, we conclude that the global minimum of $f$ is obtained as follows:
\begin{equation*}
  \sum_{i=1}^n \lim_{k\rightarrow \infty} f_i(\mathbf{x}_{i,k}) \! = \! \lim_{k\rightarrow \infty} f(\mathbf{y}_k) \! \ge \! \min f(\mathbf{x}) \ge \sum_{i=1}^n \min f_i(\mathbf{x}). 
\end{equation*}

When each local objective function $f_i$ satisfies the PL inequality, it is direct to verify that the PL inequality holds for the global objective function $f=\sum f_i$ as well. The following proposition establishes the convergence of UDPSG when only the global objective function $f$ satisfies the PL inequality:
\begin{proposition}
  Assume that each local objective function $f_i$ and the global objective function $f$ have $L$-Lipschitz continuous gradients. Assume further that $f$  satisfies the PL inequality (\ref{eqn:PL-cond}). Letting Assumptions \ref{asp:graph}-\ref{asp:conv-time-varying-matrix} hold, we have
  \begin{equation}
    \lim_{k \rightarrow \infty} \mathbb{E} [f(\mathbf{x}_{i,k}) - f^*] = 0, \quad \forall i,
  \end{equation}
  where $f^*$ is the global minimum of $f$.
\end{proposition}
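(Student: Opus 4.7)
The plan is to mirror the strategy of Lemma \ref{lemma:conv-PL-local-functions}, but applied to the global objective $f$ along the auxiliary trajectory $\{\mathbf{y}_k\}$ rather than along each local trajectory $\{\mathbf{x}_{i,k}\}$. Since $f$ has an $L$-Lipschitz continuous gradient, the standard descent inequality gives
\begin{equation*}
f(\mathbf{y}_{k+1}) \le f(\mathbf{y}_k) + \langle \nabla f(\mathbf{y}_k),\mathbf{y}_{k+1}-\mathbf{y}_k\rangle + \frac{L}{2}\|\mathbf{y}_{k+1}-\mathbf{y}_k\|^2,
\end{equation*}
and the second-order term is $\mathcal{O}(\alpha_k^2)$ by the boundedness of the subgradients together with the uniform bound $\chi$ on $\sum_i 1/A_{ii}(k)$, as used in Assumption \ref{asp:conv-time-varying-matrix}. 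So the central task is to extract a $-\frac{2\mu\alpha_k}{n}(f(\mathbf{y}_k)-f^*)$ contraction from the cross term plus a summable remainder.

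First I would decompose $\mathbf{y}_{k+1}-\mathbf{y}_k = -\sum_i \pi_i(k+1)\alpha_k \mathbf{g}_{i,k}/A_{ii}(k) + \sum_i \pi_i(k+1)\xi_{i,k}$ and pair it with $\nabla f(\mathbf{y}_k) = \sum_i \nabla f_i(\mathbf{y}_k)$. Inserting $\pm \nabla f_i(\mathbf{y}_k)/n$ and $\pm \nabla f_i(\mathbf{x}_{i,k})/n$, one writes
\begin{equation*}
\sum_i \frac{\pi_i(k+1)}{A_{ii}(k)}\nabla f_i(\mathbf{x}_{i,k}) = \frac{1}{n}\nabla f(\mathbf{y}_k) + R_k,
\end{equation*}
where $R_k$ collects (a) terms of the form $\bigl(\frac{\pi_i(k+1)}{A_{ii}(k)}-\frac{1}{n}\bigr)\nabla f_i(\mathbf{x}_{i,k})$, bounded by $L$ times $\|I-A(k)\|+|\pi_i(k+1)-1/n|$, and (b) terms $\frac{1}{n}(\nabla f_i(\mathbf{x}_{i,k})-\nabla f_i(\mathbf{y}_k))$, bounded by $L\|\mathbf{x}_{i,k}-\mathbf{y}_k\|/n$ via Lipschitz continuity. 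Pairing these with $\nabla f(\mathbf{y}_k)$ and the projection error terms (handled exactly as in the previous lemma) yields
\begin{equation*}
\mathbb{E}[f(\mathbf{y}_{k+1})-f^*] \le \Bigl(1-\frac{2\mu\alpha_k}{n}\Bigr)\mathbb{E}[f(\mathbf{y}_k)-f^*] + \lambda_k,
\end{equation*}
where the PL inequality applied to $f$ converts $-\frac{\alpha_k}{n}\|\nabla f(\mathbf{y}_k)\|^2$ into the contractive factor, and $\lambda_k$ gathers $\alpha_k^2$ terms, $\alpha_k\|\mathbf{x}_{i,k}-\mathbf{y}_k\|$ terms, $\alpha_k\|I-A(k)\|$ terms and $\alpha_k|\pi_i(k+1)-1/n|$ terms.

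Next I would verify that $\lambda_k\to 0$ using Lemma \ref{lemma:conv-xk-yk}(a) and Assumption \ref{asp:conv-time-varying-matrix}, and that $\sum_k \lambda_k$ is moreover finite by combining Lemma \ref{lemma:conv-xk-yk}(b) with $\varrho_k=\alpha_k$, Theorem \ref{theorem:conv-absolute-prob-seq}, and square summability of $\{\alpha_k\}$. Unrolling the recursion exactly as in (\ref{eqn:PL-recursive}) and invoking Lemma \ref{lemma:conv-product-non-summable-sequence} (non-summability of $\alpha_k$ makes the leading product vanish) together with a standard discrete Grönwall/summability argument on $\sum_t \prod_{r=t+1}^{k}(1-2\mu\alpha_r/n)\lambda_t$ shows $\mathbb{E}[f(\mathbf{y}_k)-f^*]\to 0$. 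Finally, because $\nabla f$ is $L$-Lipschitz and $f(\mathbf{y}_k)-f^*\ge 0$, the elementary estimate $f(\mathbf{x}_{i,k})-f(\mathbf{y}_k) \le L\eta\|\mathbf{x}_{i,k}-\mathbf{y}_k\| + \tfrac{L}{2}\|\mathbf{x}_{i,k}-\mathbf{y}_k\|^2$, combined with Lemma \ref{lemma:conv-xk-yk}(a) in expectation, transfers the limit from $\mathbf{y}_k$ to $\mathbf{x}_{i,k}$, yielding the claim.

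The main obstacle I anticipate is controlling the residual $R_k$ uniformly enough that $\sum_k \lambda_k<\infty$ (or at least $\lambda_k\to 0$ fast enough for the contracting recursion to drive $\mathbb{E}[f(\mathbf{y}_k)-f^*]$ to zero); this is where the quantitative convergence of $\pi_i(k)\to 1/n$ provided by Theorem \ref{theorem:conv-absolute-prob-seq} and the $\ell^1/k$-summability of $\|I-A(k)\|$ from Assumption \ref{asp:conv-time-varying-matrix} become indispensable, since without them the gap between the weighted average $\sum_i \pi_i(k+1)\nabla f_i(\mathbf{x}_{i,k})/A_{ii}(k)$ and the true gradient $\nabla f(\mathbf{y}_k)/n$ would not be small enough to preserve the PL-induced contraction.
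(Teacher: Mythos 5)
Your proposal follows essentially the same route as the paper's proof: the descent lemma for $f$ along $\{\mathbf{y}_k\}$, the decomposition of $\mathbf{y}_{k+1}-\mathbf{y}_k$ into the $\pi_i(k+1)/A_{ii}(k)$-weighted gradients plus projection errors, extraction of $\|\nabla f(\mathbf{y}_k)\|^2$ and application of the PL inequality to get a contraction-plus-remainder recursion unrolled as in (\ref{eqn:PL-recursive}), with the remainder controlled via Lemma \ref{lemma:conv-xk-yk}, Theorem \ref{theorem:conv-absolute-prob-seq} and Assumption \ref{asp:conv-time-varying-matrix}, and a final transfer from $\mathbf{y}_k$ to $\mathbf{x}_{i,k}$ by smoothness. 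The only real difference is that you center the weights at $1/n$ (yielding the factor $1-2\mu\alpha_k/n$ and a remainder driven by $|\pi_i(k+1)/A_{ii}(k)-1/n|$), whereas the paper centers them at $1$; your normalization is consistent with $\pi_i(k)\to \tfrac{1}{n}$ and is, if anything, the cleaner bookkeeping.
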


\begin{proof}
  Since the objective function $f$ has an $L$-Lipschitz continuous gradient, we have
  \begin{equation*}
    f(\mathbf{y}_{k+1}) \le f(\mathbf{y}_k) + \langle \nabla f(\mathbf{y}_k), \mathbf{y}_{k+1}-\mathbf{y}_k \rangle + \frac{L}{2} \|\mathbf{y}_{k+1}-\mathbf{y}_k\|^2.
  \end{equation*}
  Using $\pi(k) = \pi(k+1)A(k)$ and (\ref{eqn:y-iterates}), we obtain
  \begin{equation*}
    \begin{aligned}
      \mathbf{y}_{k+1}-\mathbf{y}_k \!=\! \sum_{i=1}^n \pi_i(k+1) \xi_{i,k} -\alpha_k \sum_{i=1}^n \frac{\pi_i(k+1)}{A_{ii}(k)}\nabla f_i (\mathbf{x}_{i,k}).
    \end{aligned}
  \end{equation*}
  We analyze each term respectively:
  \begin{subequations}
    \begin{equation}
      \mathbb{E}\left\langle \nabla f(\mathbf{y}_k), \xi_{i,k} \right\rangle \le L\zeta_k,
    \end{equation}
    \begin{equation}
      \begin{aligned}
        \sum_{i=1}^n \frac{\pi_i(k+1)}{A_{ii}(k)} \langle \nabla f(\mathbf{y}_k), \nabla f_i (\mathbf{x}_{i,k}) \rangle = \|\nabla f(\mathbf{y}_k)\|^2 \\
        + \sum_{i=1}^{n} \left( \frac{\pi_i(k+1)}{A_{ii}(k)} - 1 \right) \langle \nabla f(\mathbf{y}_k), \nabla f_i (\mathbf{x}_{i,k}) \rangle \\
        + \sum_{i=1}^n \left\langle \nabla f(\mathbf{y}_k), \nabla f_i(\mathbf{x}_{i,k})- \nabla f_i(\mathbf{y}_k) \right\rangle.
      \end{aligned}
    \end{equation}
  \end{subequations}
  According to the PL inequality in (\ref{eqn:PL-cond}), we have
  \begin{equation}
    -\alpha_k \|\nabla f(\mathbf{y}_k)\|^2 \le -2\mu \alpha_k (f(\mathbf{y}_k)-f^*),
  \end{equation}
  where $f^*$ is the global minimum of $f$. Then it follows that
  \begin{equation}
    \begin{aligned}
      \mathbb{E} \langle \nabla f(\mathbf{y}_k), \mathbf{y}_{k+1}-\mathbf{y}_k \rangle \le L\zeta_k -2\mu \alpha_k \mathbb{E}[f(\mathbf{y}_k)-f^*] \\
      +L^2 \chi (\sqrt{n}\|I-A(k)\|+ \Gamma(k,k)) \\ + L^2 \sum_{i=1}^n \|\mathbf{x}_{i,k} - \mathbf{y}_k\| .
    \end{aligned}
  \end{equation}
  Therefore, it is direct to obtain
  \begin{equation}
    \mathbb{E}[f(\mathbf{y}_{k+1}) - f^*] \le \prod_{t=1}^k \varrho_t \mathbb{E}[f^0 - f^*] + \sum_{t=1}^k \prod_{r=1}^{k-t} \varrho_r \mathbb{E} \lambda_t,
  \end{equation}
  where $\varrho_t = 1-2\mu\alpha_t$, and $\lambda_t$ stands for the other terms. Similar to (\ref{eqn:PL-recursive}), we conclude by the smoothness of $f$ that
  \begin{equation}
    \lim_{k \rightarrow \infty} \mathbb{E} [f(\mathbf{x}_{i,k}) - f^*] = 0, \quad \forall i,
  \end{equation}
  which completes the proof.
\end{proof}

In fact, functions that satisfy PL conditions belong to a wider category called invex functions. A function is invex if and only if every stationary point is a global minimum \cite{craven_glover_1985}. 
\vspace{-8pt}
\section{Simulation}
\label{sec:simulation}
In our simulation, the emphasis lies in evaluating the performance of UDPSG across three pivotal aspects: firstly, convergence for convex global objective functions; secondly, convergence for non-convex global objective functions, encompassing invex functions; and finally, a comparative analysis between UDPSG and standard DSG methods.
 
As for the generation of inhomogeneous stochastic matrices that satisfy Assumption \ref{asp:graph}, it is sufficient to produce a directed spanning tree at each iteration that is strongly connected, followed by the random inclusion of additional edges within the subgraph. Edge weights are uniformly sampled from the interval $(0,1]$ and subsequently normalized by row to ensure the resulting matrix is stochastic. We note that since the constant $\delta$ in Assumption \ref{asp:graph}c) can be set arbitrarily small (e.g., $\delta = 10^{-64n\log_2 n}$), the constraint can be neglected in practice. In terms of Assumption \ref{asp:conv-time-varying-matrix}, the constant $c$ can be set very close to $0$ if $\alpha_k=1/k$ such that the constraint on $A(k)$ is practically negligible as well. Therefore, the time-varying step size $\alpha_k = 1/k$ for convenience in the simulation. The projection space $\mathcal{X}$ is $[-1,1]^d$ where $d$ is determined by case.

We employ the proposed algorithm in addressing regression problems, a prevalent focus within decentralized machine learning applications, in a network composed of $n=6$ agents. Given a training set $\mathcal{D}=\{(a_i,b_i)\}$ for each agent, the objective of the network is to acquire a parameter $x$ that minimizes the global objective function $f=\sum_{i=1}^n f_i(x;a_i,b_i)$, where $f_i$ denotes the local objective function. 

\begin{figure}[t]
  \centering
  \includegraphics[width=0.48\textwidth]{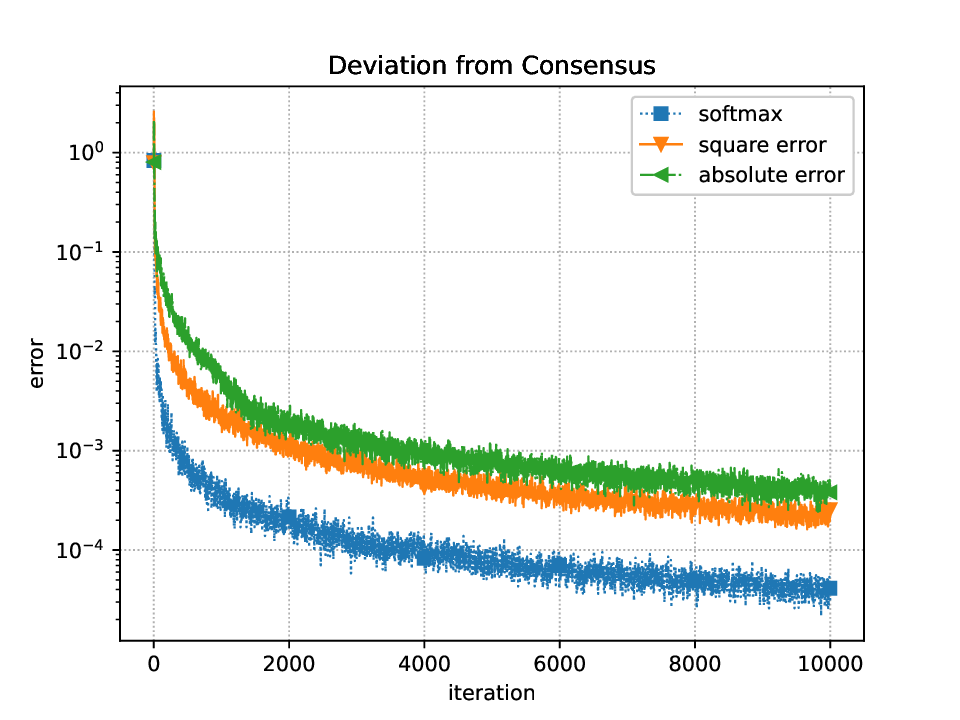}
  \caption{Average consensus errors in the case of local convex objective functions.}
  \label{fig:convex-consensus-error}
\end{figure}
\begin{figure}[t]
  \centering
  \includegraphics[width=0.48\textwidth]{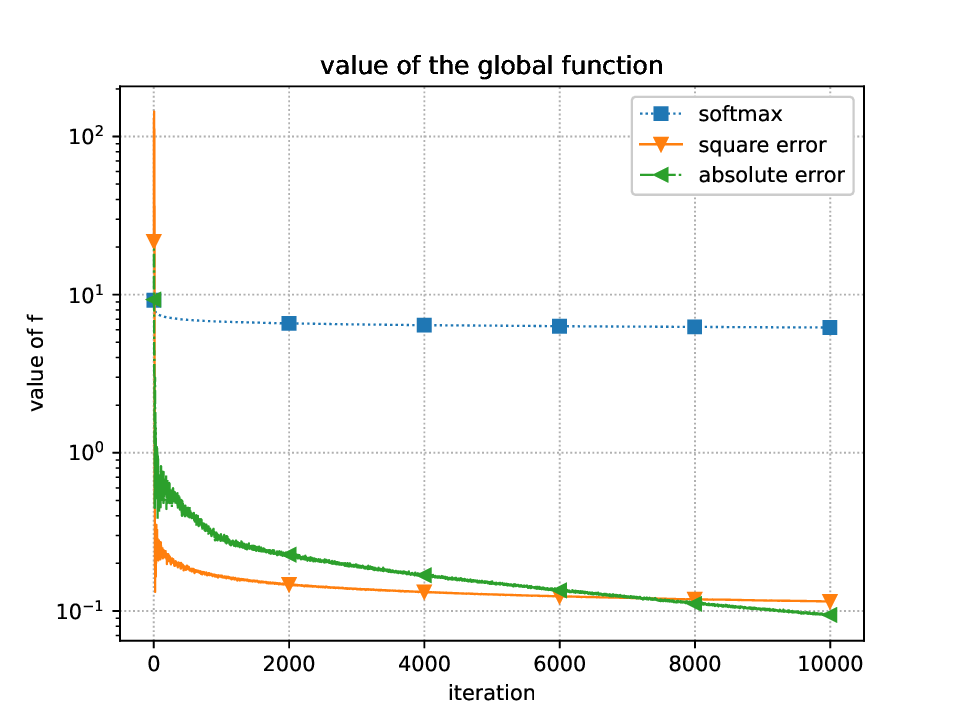}
  \caption{Value of the global convex objective functions.}
  \label{fig:convex-global-f-error}
\end{figure}

With regard to the convex local objective functions, we consider the following three kinds of functions widely used in machine learning: squared error ($f_i(x) = (\langle a_i,x \rangle - b_i)^2$), softmax ($f_i(x) = a_i \log(\sum_j e^{x_j})$), and absolute error ($f_i(x)=|\langle a_i,x \rangle-b_i|$). In addition, the computation of the average consensus error involves deriving the average norm by measuring the deviation between each agent's local parameter and the collective average parameter. As depicted in Fig. \ref{fig:convex-consensus-error}, the agents succeed in reaching consensus asymptotically under time-varying topology. Moreover, the minima of the global objective function is asymptotically attained in Fig. \ref{fig:convex-global-f-error}. 

\begin{figure}
  \centering
  \includegraphics[width=0.48\textwidth]{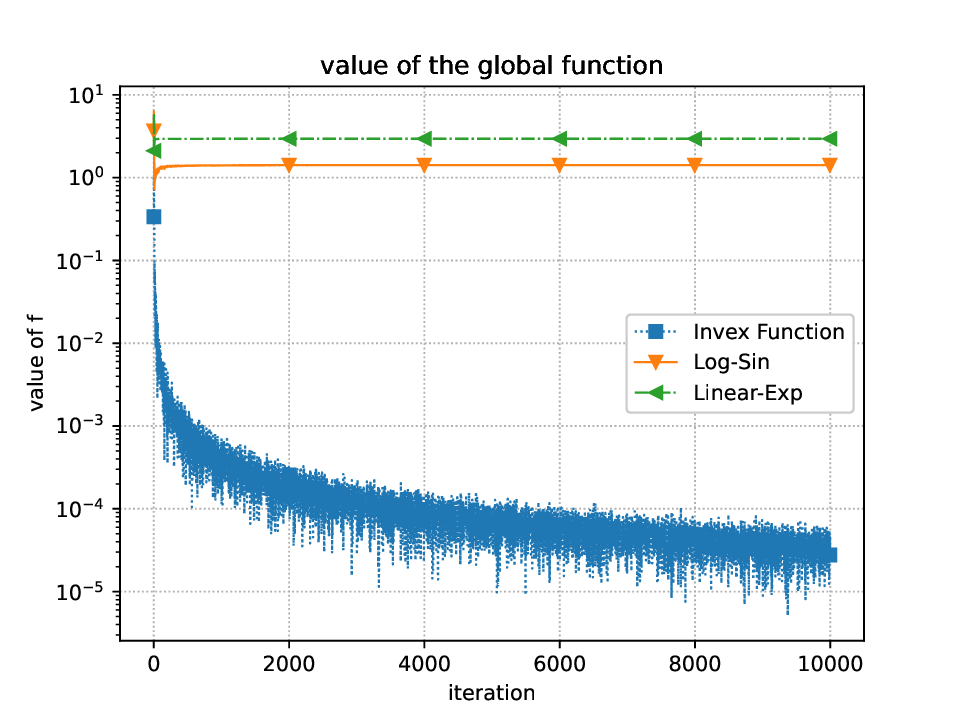}
  \caption{Value of the aggregated non-convex functions.}
  \label{fig:nonconvex-global-f-error}
\end{figure}

While the current study does not explicitly delve into the theoretical aspects pertaining to general non-convex objective functions, our interest remains focused on exploring the convergence behavior of the proposed algorithm in scenarios where the local objective functions exhibit non-convex characteristics. In the simulation, we test three kinds of non-convex functions as follows:
\begin{subequations}
  \begin{equation}
    \label{eqn:invex-func}
    \text{Invex Function: } f_i(x,y) = |a_i y| (b_i x^2 - 1)^2,
  \end{equation}
  \begin{equation}
    \text{Log-Sin: } f_i(x,y) = a_i \log(1+x^2+y^2) + \sin^2(x + b_i),
  \end{equation}
  \begin{equation}
    \text{Linear-Exp: } f_i(x,y) = (a_i y - 1/2)^2 e^{(x-b_i)^2}.
  \end{equation}
\end{subequations}
It is straightforward to check that (\ref{eqn:invex-func}) is invex and the other two functions are ``highly'' non-convex. As shown in Fig. \ref{fig:nonconvex-global-f-error}, the invex function asymptotically tends to its minimum $0$. Moreover, the proposed algorithm demonstrates the capability to identify, at the very least, a local minimum within a general non-convex function. Nevertheless, it remains challenging to ascertain whether a local minimum represents the global minimum due to the inherent complexity involved in analyzing the progression of a decentralized algorithm concerning general non-convex objective functions. 

\begin{figure}
  \centering
  \includegraphics[width=0.48\textwidth]{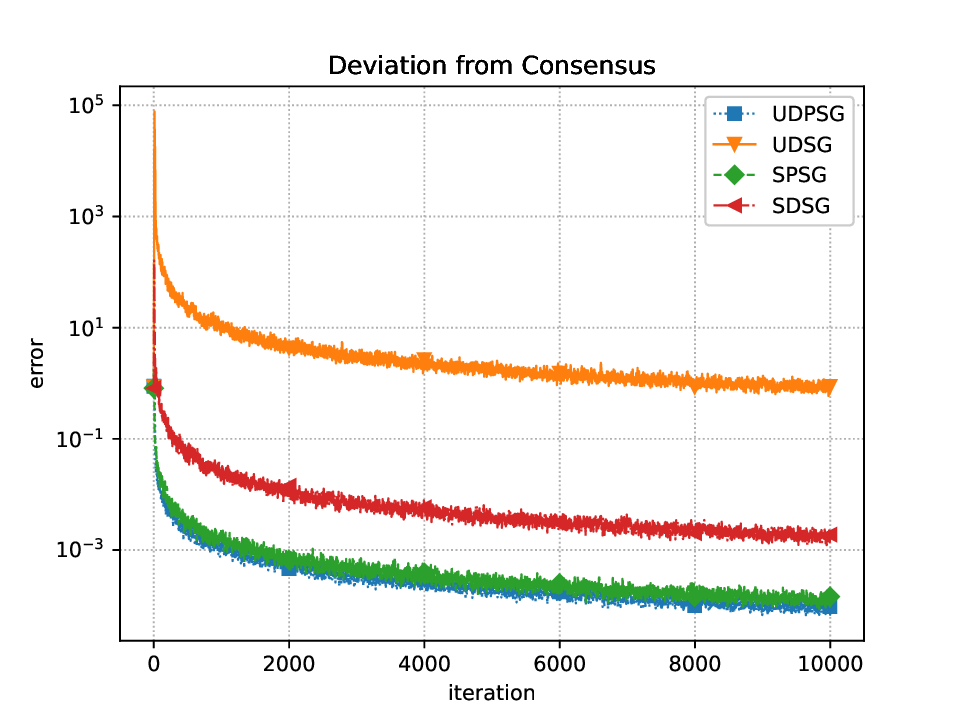}
  \caption{Average consensus errors with respect to different methods.}
  \label{fig:diff-methods-consensus-error}
\end{figure}
\begin{figure}
  \centering
  \includegraphics[width=0.48\textwidth]{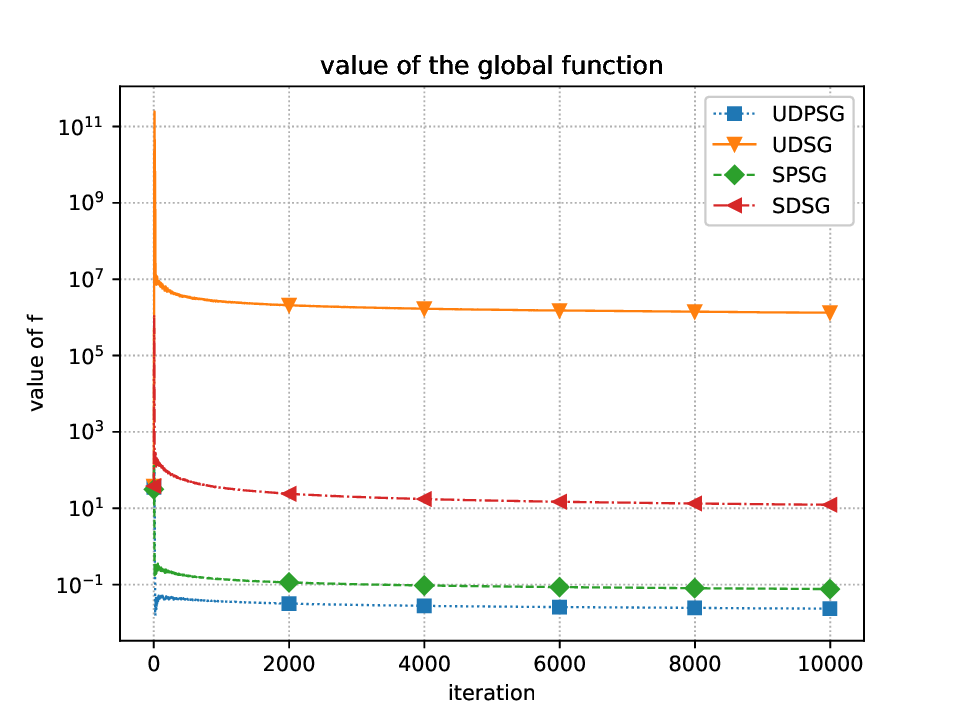}
  \caption{Value of the global objective function with respect to different methods.}
  \label{fig:diff-methods-global-f}
\end{figure}

Finally, we conduct a comparative experiment between UDPSG and standard DSG methods. We take the squared error function as the test case. Three baselines are used to better demonstrate the superior performance of UDPSG: UDSG (projection removed from UDPSG), standard distributed subgradient (SDSG), and standard distributed projected subgradient (SPSG). As deduced from Fig. \ref{fig:diff-methods-consensus-error} and Fig. \ref{fig:diff-methods-global-f}, all methods can reach consensus in terms of limiting behavior, which implies the universality of the convergence theory of the inhomogeneous infinite product. In addition, the topology-variant factor $A_{ii}(k)$ accelerates the convergence of UDPSG compared to a standard PSG method. The projection is significant in improving the performance of the subgradient method in speeding up consensus and identifying a global minima. 
\vspace{-6pt}
\section{Conclusion}
\label{sec:conclusion}
This paper has developed novel theoretical results on the convergence properties of inhomogeneous IPSMs. The asymptotic behavior of the absolute probability sequence is analyzed when the stochastic matrices tend to the identity matrix. Based on these theoretical findings, we propose a novel decentralized projected subgradient algorithm for time-varying unbalanced directed graphs. This algorithm has been proven to converge to optimal solutions of distributed optimization problems when the objective functions are either convex or non-convex but satisfy PL conditions. Moreover, the performance of the proposed algorithm has been verified through numerical simulations. The theoretical results hold promise for broad applicability in analyzing a spectrum of decentralized algorithms, with the proposed algorithm serving as an illustrative instance. 

% It is our aspiration that these theoretical insights will catalyze the exploration of more comprehensive and widely applicable outcomes concerning time-varying graphs. We encourage further endeavors to explore and expand the applications of these findings, fostering deeper insights into decentralized algorithms and their practical implications.

% \appendix
% \input{sections/Appendix.tex}

\bibliographystyle{IEEEtran}
\bibliography{IEEEabrv,ref}

% Generated by IEEEtran.bst, version: 1.12 (2007/01/11)
\begin{thebibliography}{10}
\providecommand{\url}[1]{#1}
\csname url@samestyle\endcsname
\providecommand{\newblock}{\relax}
\providecommand{\bibinfo}[2]{#2}
\providecommand{\BIBentrySTDinterwordspacing}{\spaceskip=0pt\relax}
\providecommand{\BIBentryALTinterwordstretchfactor}{4}
\providecommand{\BIBentryALTinterwordspacing}{\spaceskip=\fontdimen2\font plus
\BIBentryALTinterwordstretchfactor\fontdimen3\font minus
  \fontdimen4\font\relax}
\providecommand{\BIBforeignlanguage}[2]{{%
\expandafter\ifx\csname l@#1\endcsname\relax
\typeout{** WARNING: IEEEtran.bst: No hyphenation pattern has been}%
\typeout{** loaded for the language `#1'. Using the pattern for}%
\typeout{** the default language instead.}%
\else
\language=\csname l@#1\endcsname
\fi
#2}}
\providecommand{\BIBdecl}{\relax}
\BIBdecl

\bibitem{2023Event-Triggered}
G.~Hu, Y.~Zhu, D.~Zhao, M.~Zhao, and J.~Hao, ``Event-triggered communication
  network with limited-bandwidth constraint for multi-agent reinforcement
  learning,'' \emph{{IEEE} Trans. Neural Netw. Learn. Syst.}, vol.~34, no.~8,
  pp. 3966--3978, Aug. 2023.

\bibitem{2023FullyDist}
S.~Liu, J.~Sun, H.~Zhang, and M.~Zhai, ``Fully distributed event-driven
  adaptive consensus of unknown linear systems,'' \emph{{IEEE} Trans. Neural
  Netw. Learn. Syst.}, vol.~34, no.~10, pp. 8007--8016, Oct. 2023.

\bibitem{2023Positive}
J.~J.~R. Liu, J.~Lam, and K.-W. Kwok, ``Positive consensus of fractional-order
  multiagent systems over directed graphs,'' \emph{{IEEE} Trans. Neural Netw.
  Learn. Syst.}, vol.~34, no.~11, pp. 9542--9548, Nov. 2023.

\bibitem{2023TrackingControlMultiagent}
W.~Li, H.~Zhang, Z.~Gao, Y.~Wang, and J.~Sun, ``Fully distributed
  event/self-triggered bipartite output formation-containment tracking control
  for heterogeneous multiagent systems,'' \emph{{IEEE} Trans. Neural Netw.
  Learn. Syst.}, vol.~34, no.~10, pp. 7851--7860, Oct. 2023.

\bibitem{2023DistributedProximalGradient}
X.~Jiang, X.~Zeng, J.~Sun, and J.~Chen, ``Distributed proximal gradient
  algorithm for nonconvex optimization over time-varying networks,''
  \emph{{IEEE} Trans. Control Netw. Syst.}, vol.~10, no.~2, pp. 1005--1017,
  Jun. 2023.

\bibitem{1979CoefErgodicity}
E.~Seneta, ``Coefficients of ergodicity: Structure and applications,''
  \emph{Adv. Appl. Probab.}, vol.~11, no.~3, pp. 576--590, Sept. 1979.

\bibitem{1963ProdSIAMatrices}
J.~Wolfowitz, ``Products of indecomposable, aperiodic, stochastic matrices,''
  \emph{Proc. Am. Math. Soc.}, vol.~14, no.~5, pp. 733--737, Oct. 1963.

\bibitem{1977ExpConvProdStochasticMat}
J.~Anthonisse and H.~Tijms, ``Exponential convergence of products of stochastic
  matrices,'' \emph{J. Math. Anal. Appl.}, vol.~59, no.~2, pp. 360--364, Jun.
  1977.

\bibitem{2005AgreementConsensusProblems}
C.~W. Wu, ``Agreement and consensus problems in groups of autonomous agents
  with linear dynamics,'' in \emph{IEEE Intern. Symp. Circuit. Syst.}, vol.~1,
  Kobe, Japan, May. 23-26 2005, pp. 292--295.

\bibitem{1958hajnal_bartlett}
J.~Hajnal and M.~S. Bartlett, ``Weak ergodicity in non-homogeneous markov
  chains,'' \emph{Math. Proc. Cambridge Philos. Soc.}, vol.~54, no.~2, pp.
  233--246, Apr. 1958.

\bibitem{2008CondWeakErg}
D.~Coppersmith and C.~W. Wu, ``Conditions for weak ergodicity of inhomogeneous
  markov chains,'' \emph{Stat. Probab. Lett.}, vol.~78, no.~17, pp. 3082--3085,
  Dec. 2008.

\bibitem{2016GraphicalDecompCriterion}
Y.~Chen, W.~Xiong, and F.~Li, ``Convergence of infinite products of stochastic
  matrices: A graphical decomposition criterion,'' \emph{{IEEE} Trans. Autom.
  Control}, vol.~61, no.~11, pp. 3599--3605, Nov. 2016.

\bibitem{2009DSG}
A.~Nedic and A.~Ozdaglar, ``Distributed subgradient methods for multi-agent
  optimization,'' \emph{{IEEE} Trans. Autom. Control}, vol.~54, no.~1, pp.
  48--61, Jan. 2009.

\bibitem{2011OnErgodicity}
B.~Touri and A.~Nedic, ``On ergodicity, infinite flow, and consensus in random
  models,'' \emph{{IEEE} Trans. Autom. Control}, vol.~56, no.~7, pp.
  1593--1605, Jul. 2011.

\bibitem{2012OnApproxErg}
------, ``On approximations and ergodicity classes in random chains,''
  \emph{{IEEE} Trans. Autom. Control}, vol.~57, no.~11, pp. 2718--2730, Nov.
  2012.

\bibitem{2020ProdGeneralizedSM}
B.~Liu, W.~Lu, L.~Jiao, and T.~Chen, ``Products of generalized stochastic
  matrices with applications to consensus analysis in networks of multiagents
  with delays,'' \emph{{IEEE} Trans. Cybern.}, vol.~50, no.~1, pp. 386--399,
  Jan. 2020.

\bibitem{2019GeneralizedSarymsakov}
W.~Xia, J.~Liu, M.~Cao, K.~H. Johansson, and T.~Başar, ``Generalized
  sarymsakov matrices,'' \emph{{IEEE} Trans. Autom. Control}, vol.~64, no.~8,
  pp. 3085--3100, Aug. 2019.

\bibitem{2020OptimalDistributed}
A.~Rogozin, C.~A. Uribe, A.~V. Gasnikov, N.~Malkovsky, and A.~Nedić, ``Optimal
  distributed convex optimization on slowly time-varying graphs,'' \emph{{IEEE}
  Trans. Control Netw. Syst.}, vol.~7, no.~2, pp. 829--841, Jun. 2020.

\bibitem{2022OnlineLearningAlgorithm}
J.~Li, C.~Gu, Z.~Wu, and T.~Huang, ``Online learning algorithm for distributed
  convex optimization with time-varying coupled constraints and bandit
  feedback,'' \emph{{IEEE} Trans. Cybern.}, vol.~52, no.~2, pp. 1009--1020,
  Feb. 2022.

\bibitem{2021OnlineLearning}
R.~Dixit, A.~S. Bedi, and K.~Rajawat, ``Online learning over dynamic graphs via
  distributed proximal gradient algorithm,'' \emph{{IEEE} Trans. Autom.
  Control}, vol.~66, no.~11, pp. 5065--5079, Nov. 2021.

\bibitem{2023DistributedContinuousTime}
C.~Wu, H.~Fang, X.~Zeng, Q.~Yang, Y.~Wei, and J.~Chen, ``Distributed
  continuous-time algorithm for time-varying optimization with affine formation
  constraints,'' \emph{{IEEE} Trans. Autom. Control}, vol.~68, no.~4, pp.
  2615--2622, Apr. 2023.

\bibitem{2020AnalysisDesign}
A.~Sundararajan, B.~Van~Scoy, and L.~Lessard, ``Analysis and design of
  first-order distributed optimization algorithms over time-varying graphs,''
  \emph{{IEEE} Trans. Control Netw. Syst.}, vol.~7, no.~4, pp. 1597--1608, Dec.
  2020.

\bibitem{2023GTDistributedOptimization}
Y.~Wang and T.~Başar, ``Gradient-tracking-based distributed optimization with
  guaranteed optimality under noisy information sharing,'' \emph{{IEEE} Trans.
  Autom. Control}, vol.~68, no.~8, pp. 4796--4811, Aug. 2023.

\bibitem{1961SarymsakovMatrix}
T.~A. Sarymsakov, ``Inhomogeneous markov chains,'' \emph{Theory Probab. Appl.},
  vol.~6, no.~2, pp. 178--185, 1961.

\bibitem{2018ConvergentProdStochasticMatrices}
P.-Y. Chevalier, ``Convergent products of stochastic matrices: Algorithms and
  complexity,'' Ph.D. dissertation, Université {C}atholique de Louvain, 2018.

\bibitem{1936OnTheoryofMarkovChains}
A.~Kolmogoroff, ``Zur {T}heorie der {M}arkoffschen {K}etten,'' \emph{Math.
  Ann.}, vol. 112, pp. 155--160, 1936.

\bibitem{2017ConvRateWeighted-AveragingDynamics}
A.~Nedić and J.~Liu, ``On convergence rate of weighted-averaging dynamics for
  consensus problems,'' \emph{{IEEE} Trans. Autom. Control}, vol.~62, no.~2,
  pp. 766--781, Feb. 2017.

\bibitem{1971SuperMartingale}
H.~Robbins and D.~Siegmund, ``A convergence theorem for non negative almost
  supermartingales and some applications,'' \emph{Optim. Methods Statist.}, pp.
  233--257, Jun. 1971.

\bibitem{2010ConstrainedConsensus}
A.~Nedic, A.~Ozdaglar, and P.~A. Parrilo, ``Constrained consensus and
  optimization in multi-agent networks,'' \emph{{IEEE} Trans. Autom. Control},
  vol.~55, no.~4, pp. 922--938, Apr. 2010.

\bibitem{1971ConvSupermartingale}
H.~Robbins and D.~Siegmund, ``A convergence theorem for non negative almost
  supermartingales and some applications,'' in \emph{Optim. Methods. Stat.},
  J.~S. Rustagi, Ed.\hskip 1em plus 0.5em minus 0.4em\relax Academic Press,
  Jun. 1971, pp. 233--257.

\bibitem{1963POLYAK864}
B.~Polyak, ``Gradient methods for the minimisation of functionals,'' \emph{USSR
  Comput. Math. Math. Phys.}, vol.~3, no.~4, pp. 864--878, 1963.

\bibitem{craven_glover_1985}
B.~D. Craven and B.~M. Glover, ``Invex functions and duality,'' \emph{J. Aust.
  Math. Soc.}, vol.~39, no.~1, pp. 1--20, Aug. 1985.

\end{thebibliography}

\end{document}